\documentclass[12pt]{article}
\usepackage[left = 1.4cm, right = 1.8cm, top = 1.5cm, bottom = 2cm]{geometry}

\usepackage{amssymb}
\usepackage{amsmath}
\usepackage{adjustbox}
\usepackage{multirow}
\usepackage{enumerate}
\usepackage{enumitem}
\usepackage[colorlinks,
            linkcolor=blue,
            anchorcolor=blue,
            citecolor=blue
            ]{hyperref}
\usepackage{amsthm}
\usepackage{authblk} 
\usepackage{booktabs}
\usepackage{caption}
\usepackage{arydshln}

\makeatletter
\newcommand{\rmnum}[1]{\romannumeral #1}
\newcommand{\Rmnum}[1]{\expandafter\@slowromancap\romannumeral #1@}
\makeatother

\newtheorem{theorem}{Theorem}
\newtheorem{lemma}{Lemma}
\newtheorem{proposition}{Proposition}

\newtheorem{assumption}{Assumption}

\newtheorem{remark}{Remark}

\begin{document}

\title{Regularity and high-order time stepping for semilinear subdiffusion equations with singular initial data beyond the $L^\infty$ framework}

\author[1]{Runjie Zhang\thanks{Email: rjzhangxtu@163.com}}
\author[1]{Dongling Wang\thanks{Corresponding author: Wang. Email: wdymath@xtu.edu.cn. The research of Dongling Wang is supported in part by the National Natural Science Foundation of China under Grants 12271463 and the Natural Science Foundation of Hunan Province under Grant 2026JJ50363.}}

\affil[1]{Hunan Research Center of the Basic Discipline Fundamental Algorithmic Theory and Novel Computational Methods, School of Mathematics and Computational Science, Xiangtan University, Xiangtan, Hunan 411105, China}

\maketitle

\abstract{This paper aims to analyze a numerical scheme for semilinear subdiffusion problems with singular initial data beyond the $L^\infty$ framework. The main difficulty lies in the stronger singular behavior of the nonlinear term compared with previous analyses. Since the singular initial datum is too rough to guarantee a uniform $L^\infty$ bound for the solution, the usual Lipschitz framework in the base space is no longer sufficient. The analysis must instead be carried out in weaker fractional Sobolev-type spaces, where nonlinear composition is more delicate and the term $f(u(t))$ may exhibit an amplified singularity relative to that of $u(t)$. To overcome this difficulty, we exploit the smoothing properties of the subdiffusion solution operators and formulate suitable nonlinear assumptions in fractional operator spaces. These smoothing estimates allow part of the singularity to be transferred from the nonlinear term to the solution operators, where it can be controlled. Under these assumptions, we establish well-posedness and regularity results for the mild solution and derive a pointwise-in-time error estimate for the exponential convolution quadrature method. Numerical experiments confirm the predicted convergence rates. }

\newcommand{\keywords}[1]{\textbf{Keywords:} #1}

\keywords{semilinear subdiffusion equation, singular initial data, convolution quadrature, exponential integrator}

\newcommand{\pacs}[1]{\textbf{Mathematics Subject Classification (2010):} #1}
\pacs{35R11, 65M15, 65R20}

\section{Introduction}
This paper analyzes high-order time-stepping methods for the semilinear subdiffusion problem
\begin{equation}\label{eq_main}
    \begin{cases}
    \partial_t^\alpha u(t) + Au(t) = f(u(t)),\quad 0<t\leq T,
    \\
    u(0)=u_0,
\end{cases}
\end{equation}
on the Hilbert space $X = L^2(\Omega)$, where $\Omega$ is a convex polygonal domain in $\mathbb{R}^d$, $d=1,2,3$. 
The operator $\partial_t^\alpha u$ denotes the Caputo fractional time derivative of order $\alpha\in(0,1)$ in time, defined by
\begin{equation}\label{def_fracDeri}
    \partial_t^\alpha u:=\frac1{\Gamma(1-\alpha)}\int_0^t(t-s)^{-\alpha}\frac{\mathrm{d}}{\mathrm{d}s}u(s)\:\mathrm{d}s,
\end{equation}  
with $\Gamma(z)=\int_0^\infty s^{z-1}e^{-s}\text{d}s$ being the gamma function. 
The spatial operator $A:D(A)\subset X\to X$ is the sectorial operator
induced by a second-order elliptic operator with suitable boundary conditions. The nonlinear term $f$ is assumed to be a smooth function on $\mathbb{R}$. The initial value is allowed to be nonsmooth: $u_0\in D(A^\gamma)$ for some $0<\gamma\leq d/4$. A function $u \in \mathcal{C}([0, T]; L^2(\Omega))$ is called a mild solution of \eqref{eq_main} if it satisfies the integral equation
\begin{equation}\label{eq_mildSolu}
    u(t)=F(t)u_0+\int_0^t E(t-s)f(u(s))\text{d}s,
\end{equation}
where $F(t)$ and $E(t)$ are the solution operators defined in \eqref{def_solOpera} below. The mild and strong formulations are equivalent whenever the solution possesses sufficient regularity. 

The subdiffusion equation in the form \eqref{eq_main} has attracted considerable attention in the development of stable and accurate numerical methods, along with rigorous numerical analysis, owing to its remarkable ability to model a wide range of anomalously slow transport processes. 
Two major strategies are available for approximating the fractional derivative \eqref{def_fracDeri}: convolution quadrature (CQ) \cite{lubich86,lubich88,cuesta06,yuste06} and L1-type methods \cite{langlands05,lin07,sun06,alikhanov15}. In the stability analysis of L1-type methods, deriving explicit bounds for discrete Gronwall inequalities is of great significance, and several such inequalities have been established in the literature \cite{liao18,liao19,feng24}. For a comprehensive treatment of these two strategies, see the recent monograph \cite{JZ23}.

Solutions of subdiffusion equations typically exhibit a weak singularity at $t=0$, 
which is inherited by the nonlinear term $f(u(t))$ and can be amplified in certain norms; 
this behavior constitutes a key difficulty in the analysis of nonlinear subdiffusion problems. Nevertheless, the semilinear subdiffusion model \eqref{eq_main} has been extensively studied from both theoretical and numerical perspectives in recent years.

The first rigorous analysis, given in \cite{JL18}, proposed a general framework for the mathematical and numerical analysis of the semilinear subdiffusion problem with initial data $u_0\in H^2\cap H^1_0$. It studied a time-stepping scheme based on the backward Euler CQ scheme or the L1 method, and established an $O(\tau^\alpha)$ error bound uniform over the fixed time interval $[0,T]$. 
Subsequent work in \cite{AK19} extended the analysis to nonsmooth initial data $u_0\in \dot{H}^s$, $s\in[0,2)$, and obtained a pointwise-in-time convergence rate $O(\tau)$ for the backward Euler CQ scheme. To achieve higher convergence orders, special treatment at the initial time steps can be applied. In \cite{LW19, Kop25}, the L1 and L2 schemes on graded meshes were analyzed, and optimal convergence rates of $O(\tau^{2-\alpha})$ and $O(\tau^{3-\alpha})$ were derived, respectively. Correction of high-order BDF CQ schemes was examined in \cite{WZ20}, yielding a convergence rate of at most $1+2\alpha$ for general initial data in $H_0^1 \cap \mathcal{C}^2(\Omega)$. More recently, \cite{ECQ22, ESM23} introduced a new class of high-order methods that are easy to construct, with the latter  achieving spectral convergence. Similar to CQ methods, these approaches start from the Laplace transform representation \eqref{eq_mildSolu} of the solution, but they employ a quadrature approximation of contour integrals, exponential integrators for the ODE arising from the nonlinear term, and locally refined stepsizes to resolve the initial singularity. 
Other aspects of nonlinear subdiffusion equations have also been investigated. The analysis of the L1 scheme on graded meshes was carried out in \cite{HS20}, where the optimal convergence rate in the $H^1$ norm was established. Convolution quadrature was applied to a quasilinear subdiffusion model in \cite{lopez25, Jin25}, and the corresponding solution regularity theory and error bounds were developed. It was proved in \cite{wang23} that, under suitable structural assumptions, the discrete solutions to time-fractional evolutionary equations exhibit the long-time power-law decay rate. In addition, a numerical scheme for the backward problem of semilinear subdiffusion equations was proposed and analyzed in \cite{WYZ25}.

However, prior analyses of semilinear subdiffusion equations typically require the nonlinearity $f$ to satisfy a Lipschitz condition in the base space. 
This requirement is justified when the solution remains uniformly bounded in $L^\infty(\Omega)$ over time (see \cite{JL18,ECQ22}), because $L^\infty$ is closed under pointwise multiplication and composition with smooth functions; i.e.,
$$
    \Vert v_1 v_2\Vert_{L^\infty}\leq \Vert v_1\Vert_{L^\infty} \Vert v_2\Vert_{L^\infty} \text{ and } f(v_1)\in L^\infty \quad \text{for } v_1, v_2\in L^\infty.$$  
Nevertheless, in practice, some initial data may belong to $L^2$ but not to $L^\infty$ \cite{ghergu23, LiMa22NS, brezis96} --- for instance, in biological models of species invasion or local outbreaks the initial density can be extremely concentrated, such as $|x|^{-1/4}$. In these cases the $L^\infty$-norm of the solution is singular near $t=0$, rendering the global Lipschitz assumption invalid and calling for new nonlinear constraints. Hence there is a gap for semilinear subdiffusion problems when the initial datum lies strictly between $L^2$ and $L^\infty$. 

Fractional Sobolev spaces $H^{2\gamma}$ ($0<\gamma\le d/4$) naturally fill this regularity window and serve as our starting point. Unlike $L^\infty$, these spaces are less well-behaved: if the solution $u(t)$ is merely bounded in such a space, the nonlinear term $f(u(t))$ can possess a stronger singularity, making the numerical analysis substantially harder. One therefore needs to quantify and then mitigate this singularity.

For semilinear parabolic equations, analogous difficulties have been overcome. By considering polynomial nonlinearities, the authors of \cite{ERK_FNM, HERK25} exploit pointwise multiplication properties in fractional Sobolev spaces, together with duality arguments, to formulate new nonlinear constraints. Under these constraints, they derived sharp error estimates for exponential 
Runge--Kutta methods with nonsmooth initial data. 
In the present work, we distill these key nonlinear constraints into simplified Assumptions \ref{ass:Lips_L2}--\ref{ass:FrechetDiff}, and apply them to the numerical analysis of fractional subdiffusion equations.

A key idea in \cite{ERK_FNM, HERK25} is the factorization $I = A^{\theta}A^{-\theta}$, 
which shifts part of the singularity away from the nonlinearity $f(u)$ and lets it be absorbed by the parabolic semigroup $e^{-tA}$. 
The subdiffusion solution operator $E(t)$ possesses analogous smoothing properties. Moreover, the method proposed in \cite{ECQ22} shares the philosophy of the exponential integrators above: it starts from the mild solution, approximates the solution operator via a high-accuracy algorithm (contour integral), and treats the nonlinear part by polynomial extrapolation. Furthermore, this method is efficient---it is high-order, parallelizable, and avoids the large memory consumption typical of time-fractional PDEs. We therefore adopt this method as our discretization and carry out its error analysis under the new assumptions.

The main contributions of this work are outlined as follows:
\begin{enumerate}
    \item We establish a solution theory for problem \eqref{eq_main} with singular initial data. More precisely, we adapt the semilinear parabolic argument of \cite[Theorem 4.1]{ABsPEE} to the fractional-in-time setting by replacing the analytic semigroup estimates with the smoothing estimates for the subdiffusion solution operators in Lemma \ref{le:smoo_solOpe}. This yields the local well-posedness result in Theorem \ref{thm:localWelPos}. 
    We also establish positive-time regularity estimates for the solution and the nonlinear term in Proposition \ref{pro:solRegu}, extending the $L^\infty$-based estimates of \cite[Theorem 3.2]{ECQ22} to suitable negative-order fractional domain spaces and yielding milder time singularities.

    \item  We prove a pointwise-in-time error estimate for the high-order time-stepping schemes of \cite{ECQ22}; see Theorem~\ref{thm:convergence}. The result extends \cite[Theorem 2.1]{ECQ22} by allowing initial data outside $L^\infty$ and by relaxing the requirement on the exponent in the power-law temporal mesh \eqref{def_gradeMesh}. The proof follows a broadly similar strategy to that in \cite{ECQ22}, but is substantially more delicate as it is carried out in fractional domain spaces and must additionally handle the singularity transferred from the nonlinear term.
\end{enumerate}

The paper is structured as follows. Section \ref{sec:NumerDiscre} introduces the notational conventions and presents the construction of the time-stepping method. Section \ref{sec:assAres} states the principal hypotheses and conclusions. The proofs of these results are given in Section \ref{sec:proof_res}. Numerical examples are presented in Section \ref{sec:num_exper} to validate the theoretical findings.

\section{Numerical Discretization}\label{sec:NumerDiscre}

We start by introducing some notation. We take the underlying space $X = L^2$ equipped with the norm $\Vert \cdot \Vert$. Moreover, when no ambiguity arises, the norm on the space $\mathcal{L}(X)$ of bounded linear operators from $X$ to $X$ is also denoted by $\Vert \cdot \Vert$. 
Let $A$ be a densely defined closed linear operator on $X$ which is generated by a second-order elliptic operator subject to various boundary conditions (see, e.g., \cite[Chapter 2]{ABsPEE}) and satisfies the resolvent estimate
$$
    \Vert (z + A)^{-1} v\Vert \leq C_\varphi |z|^{-1} \Vert v\Vert \quad \forall z \in \Sigma_\varphi,\: \varphi \in (\pi/2, \pi),\: \forall v \in L^2(\Omega),
$$
where $\Sigma_\varphi = \{z \in \mathbb{C} : |\arg(z)| \leq \varphi\}$ is a sector in the complex plane. 
For any $s \in \mathbb{R}$, we denote by $D(A^s)$ the Banach space equipped with the graph norm $\Vert\cdot\Vert_s=\Vert A^s\cdot\Vert$. Throughout this paper, $C$ stands for a generic positive constant and $\varphi(\cdot)$ denotes an increasing continuous function; both may vary across instances. We also write $a\simeq b$ for two nonnegative quantities $a$ and $b$ if $C^{-1}b\leq a\leq Cb$, meaning that $a$ and $b$ are comparable. 

The solution operators $F(t)$ and $E(t)$ associated with the subdiffusion equation are defined via the inverse Laplace transform of the operators $z^{\alpha-1}(z^\alpha+A)^{-1}$ and $(z^\alpha+A)^{-1}$, respectively; i.e.,
\begin{equation}\label{def_solOpera}
\begin{aligned}
    F(t)&=\frac{1}{2\pi i}\int_{\text{Re}(z)=\sigma}e^{zt} z^{\alpha-1}(z^\alpha+A)^{-1}\mathrm dz,\\E(t)&=\frac{1}{2\pi i}\int_{\text{Re}(z)=\sigma} e^{zt}(z^\alpha+A)^{-1}\mathrm dz.
\end{aligned}
\end{equation}
The integration contours in the definitions can be deformed, respectively, into the following two contours in the complex plane:
\begin{equation}\label{def_contour}
    \Gamma_\lambda=\{\lambda(1-\sin(\beta+is)):s\in\mathbb{R}\}\quad\mathrm{and}\quad\Gamma_{\tilde{\lambda}}=\{\tilde{\lambda}(1-\sin(\beta+is)):s\in\mathbb{R}\},
\end{equation}
where $\beta\in(0,\varphi-\frac\pi2)$, $\varphi\in(\frac\pi2,\pi)$, and $\lambda,\tilde{\lambda}$ are fixed positive constants. These contours lie within the region between the two sectors $\Sigma_\varphi$ and $\lambda+\Sigma_{\beta+\frac\pi2}$; see \cite[Figure 2.1]{ECQ22}.

We now proceed to the numerical discretization, which follows the scheme proposed in \cite{ECQ22}. Specifically, the mild solution \eqref{eq_mildSolu} can be rewritten as
$$\begin{aligned}
    u(t)=&\frac1{2\pi i}\int_{\Gamma_\lambda}e^{zt} z^{\alpha-1}(z^\alpha+A)^{-1} u_0 \mathrm{d}z
    \\
    &\quad+\frac1{2\pi i}\int_{\Gamma_{\tilde{\lambda}}}(z^\alpha+A)^{-1}\int_0^t e^{z(t-s)}f(u(s)) \mathrm{d}s \mathrm{d}z.
\end{aligned}$$
It is well established that the two contour integrals above can be approximated by a quadrature rule that converges spectrally as the number of quadrature points increases; i.e.,
\begin{equation}\label{eq_quadAppro}
\begin{aligned}
    u(t)&=\sum_{j=-M}^{M}\omega_je^{z_j t}z_j^{\alpha-1}(z_j^\alpha+A)^{-1}u_0+\mathcal{E}_{1,q}(t)
    \\
    &\quad+\sum_{j=-M}^{M}\tilde{\omega}_j(\tilde{z}_j^\alpha+A)^{-1} y(\tilde{z}_j,t)+\mathcal{E}_{2,q}(t),
\end{aligned}   
\end{equation}
where a total of $2M+1$ quadrature points are employed, $(\omega_j,z_j)$ and $(\tilde{\omega}_j,\tilde{z}_j)$ are two pairs of quadrature weights and nodes, $\mathcal{E}_{1,q}(t)$ and $\mathcal{E}_{2,q}(t)$ denote the corresponding quadrature errors, and $y(z, t)=\int_0^t e^{z(t- s)}f(u(s))\text{d}s$. The explicit formulas for the quadrature weights and nodes are given in \cite[Section 2]{ECQ22}.

From the definition of $y(z,t)$, one readily verifies that it satisfies the ordinary differential equation $\partial_{t} y(z, t) = z y(z, t) + f(u(t))$. 
An exponential multistep method is employed to solve this equation. 
We first introduce a temporal mesh. Because the solution exhibits a singularity at the initial time, a locally refined mesh is employed. Let $k$ be an arbitrary positive integer and let $\nu \in\big(\max(1-\frac{1+\alpha\gamma}{k},0),1\big)$ be a given constant, where 
$\gamma$ characterizes the regularity of the initial data. Consider a partition $0=t_0<t_1<\cdots<t_N=T$ of the interval $[0,T]$ with step sizes
\begin{equation}\label{def_gradeMesh}
    \tau_1 \simeq T\left(\frac\tau T\right)^{\frac1{1-\nu}}\quad\mathrm{and}\quad\tau_n=t_n-t_{n-1}\simeq\left(\frac{t_{n-1}}T\right)^\nu\tau\quad\text{for }n\geq 2,
\end{equation}
where $\tau$ is the maximum step size. The total number of time levels is $O(T/\tau)$. 

By Duhamel's principle, the function $y(z,t)$ can be expressed as
$$
    y(z, t_{n}) = e^{z \tau_{n}} y(z, t_{n-1}) + \int_{t_{n-1}}^{t_{n}} e^{z(t_{n}-s)} f(u(s)) \text{d}s.
$$
In this expression we then approximate $f(u(s))$ using a $k$-step extrapolation
\begin{equation}\label{def_extrap}
\begin{aligned}
     f(u(s)) &= \sum_{i=1}^{k} L_{ni}(s) f(u(t_{n-i})) + \mathcal{E}_f^n(s)
    \\
    &=: I_\tau f(u)(s) + \mathcal{E}_f^n(s) \quad \text{for } s \in (t_{n-1}, t_n] \text{ and } n \geq k,
\end{aligned}
\end{equation}
where $L_{ni}(s)$ denotes the unique polynomial of degree $k-1$ satisfying 
$$L_{ni}(t_{n-j}) = \delta_{ij} \quad \text{for } j = 1, 2, \dots, k.$$

In summary, the numerical scheme is given by
\begin{equation}\label{NumSch_un}
    u_{n}=\sum_{j=-M}^{M}w_{j}(t_{n})e^{z_{j}(t_{n})t_{n}}z_{j}^{\alpha-1}(t_{n})(z_{j}^{\alpha}(t_{n})+A)^{-1}u_{0}+\sum_{j=-M}^{M}\tilde{w}_{j}(\tilde{z}_{j}^{\alpha}+A)^{-1}y_{n}(\tilde{z}_{j}),
\end{equation}
with $y_n(\tilde{z}_j)$ defined as
\begin{equation}\label{NumSch_yn}
    y_n(\tilde{z}_j)=\left\{ 
    \begin{aligned}
        &e^{\tilde{z}_j\tau_n}y_{n-1}(\tilde{z}_j) + \int_{t_{n-1}}^{t_n} e^{\tilde{z}_j(t_n-s)}f(u_{n-1})\text{d}s &&\text{for } n=1,...,k-1, 
        \\
        &e^{\tilde{z}_j\tau_n}y_{n-1}(\tilde{z}_j)+\sum_{i=1}^k\bigg(\int_{t_{n-1}}^{t_n}e^{\tilde{z}_j(t_n-s)}L_{ni}(s)\mathrm{d}s\bigg)f(u_{n-i})  &&\text{for } n\geq k.
    \end{aligned}\right.
\end{equation}

\section{Nonlinear assumptions and main results}\label{sec:assAres}

In this section we present the principal hypotheses and the main conclusions of the paper.  
We first examine the well-posedness of equation \eqref{eq_main}. Following \cite[Chapter 4]{ABsPEE}, we impose a general local Lipschitz condition on the nonlinear term $f(u)$ in the fractional operator space $D(A^\theta)$.

\begin{assumption}\label{ass:Lips_L2}
    We postulate the existence of regularity parameters $\gamma_1$ and $\gamma_2$ satisfying $0\leq \gamma_1\leq \gamma_2<1$ and $\gamma_1\leq d/4$, such that the nonlinearity $f(u)$ in \eqref{eq_main} fulfills the local Lipschitz condition
    \begin{equation}\label{Lips_L2}
        \Vert f(u)-f(v)\Vert \leq  \varphi(\Vert A^{\gamma_1}u\Vert+ \Vert A^{\gamma_1}v\Vert) \Vert A^{\gamma_2}(u-v) \Vert, \quad \text{for }u,v\in D(A^{\gamma_2}),
    \end{equation}
    where $\varphi(\cdot)$ is an increasing continuous function.
\end{assumption}

With this assumption, and by combining the analytical techniques from \cite[Theorem 4.1]{ABsPEE}, we deduce the local well-posedness of the mild solution \eqref{eq_mildSolu}.

\begin{theorem}\label{thm:localWelPos}
    Let $u_0 \in D(A^\gamma)$ with $\gamma \in (\gamma_1, d/4]$, and suppose that the nonlinear term $f(u)$  satisfies Assumption \ref{ass:Lips_L2}. Then problem \eqref{eq_mildSolu} admits a unique local solution $u$ such that
    \begin{equation}\label{sol_regular}
        u \in \mathcal{C}([0,T_{u_0}];D(A^\gamma)) \cap \mathcal{C}((0,T_{u_0}];D(A)), 
    \end{equation}
    where $T_{u_0} > 0$ depends only on the norm $\Vert A^{\gamma}u_0\Vert$.
\end{theorem}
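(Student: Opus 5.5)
The plan is a contraction mapping argument for the mild formulation \eqref{eq_mildSolu}, as in \cite[Theorem 4.1]{ABsPEE}, with the analytic semigroup replaced by $F(t)$ and $E(t)$. I will use the smoothing estimates of Lemma \ref{le:smoo_solOpe} in the standard form
$$\Vert A^{\mu}F(t)v\Vert\le Ct^{-\alpha(\mu-\nu)}\Vert A^\nu v\Vert,\qquad \Vert A^{\mu}E(t)\Vert\le Ct^{\alpha(1-\mu)-1},$$
valid for $0\le\nu\le\mu\le1$ and $0\le\mu\le1$ respectively. I also use the strong continuity $F(t)v\to v$ in $D(A^\gamma)$ as $t\to0$.

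\textbf{Setting up the fixed-point space.} Fix $\delta\in(\max(\gamma,\gamma_2),1)$. For $T_0\le T$ and $R>0$, let $\mathcal{K}$ be the set of functions $u\in\mathcal C([0,T_0];D(A^\gamma))\cap\mathcal C((0,T_0];D(A^{\delta}))$ satisfying
$$\sup_t\Vert A^\gamma(u(t)-F(t)u_0)\Vert\le R,\qquad \sup_t t^{\alpha(\delta-\gamma)}\Vert A^{\delta}u(t)\Vert\le K,$$
where $K=C\Vert A^\gamma u_0\Vert+R$. Equip $\mathcal{K}$ with the metric
$$d(u,v)=\sup_t\Vert A^\gamma(u-v)\Vert+\sup_t t^{\alpha(\delta-\gamma)}\Vert A^\delta(u-v)\Vert.$$
Define $\Phi(u)(t)=F(t)u_0+\int_0^tE(t-s)f(u(s))\,ds$.

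\textbf{Bounding the nonlinearity and closing the contraction.} Because $\gamma>\gamma_1$, we have $\Vert A^{\gamma_1}u(s)\Vert\le C\Vert A^\gamma u(s)\Vert$, which is bounded by $C\Vert A^\gamma u_0\Vert+R$. Interpolation gives $\Vert A^{\gamma_2}w\Vert\le\Vert A^\gamma w\Vert^{1-\theta}\Vert A^\delta w\Vert^{\theta}$ if $\gamma_2\ge\gamma$, or simply $\Vert A^{\gamma_2}w\Vert\le C\Vert A^\gamma w\Vert$ otherwise. Assumption \ref{ass:Lips_L2}, applied with $v=0$ and with $v$ a second element of $\mathcal K$, then yields
$$\Vert f(u(s))\Vert\le C_R\,(1+s^{-\alpha(\gamma_2-\gamma)^+}),\qquad \Vert f(u(s))-f(v(s))\Vert\le C_R\,s^{-\alpha(\gamma_2-\gamma)^+}d(u,v).$$
Inserting these bounds into the smoothing estimate for $E$ reduces everything to Beta integrals:
$$\int_0^t(t-s)^{\alpha(1-\mu)-1}s^{-\alpha(\gamma_2-\gamma)^+}ds=C\,t^{\alpha(1-\mu)-\alpha(\gamma_2-\gamma)^+}.$$
This integral is finite because $\gamma_2<1$. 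For $\mu=\gamma$ and $\mu=\delta$ (after multiplying by the weight $t^{\alpha(\delta-\gamma)}$), the resulting powers of $t$ are positive, since $1-\gamma_2>0$. Choosing $T_0$ small, depending only on $R$ and $\Vert A^\gamma u_0\Vert$, therefore makes $\Phi$ map $\mathcal K$ into itself and makes it a contraction. Continuity in time follows from the strong continuity of $F$ and $E$ together with dominated convergence. Banach's fixed-point theorem then gives existence, and uniqueness in $\mathcal C([0,T_0];D(A^\gamma))$ follows from a Gronwall-type argument with a weakly singular kernel (Henry's lemma) applied to the same difference estimate.

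\textbf{Upgrading to $D(A)$ for $t>0$.} This is the main obstacle, because $\Vert AE(t)\Vert\sim t^{-1}$ is not integrable at $s=t$. To handle it, I will first show that $f(u)$ is Hölder continuous into $X$ on $[\epsilon,T_0]$. This follows from Assumption \ref{ass:Lips_L2} together with the Hölder continuity of $u$ in $D(A^{\gamma_2})$ for positive times, which in turn comes from the smoothing estimates applied to $u(t+h)-u(t)$. I then split
$$\int_0^tE(t-s)f(u(s))\,ds=\int_0^tE(t-s)\big(f(u(s))-f(u(t))\big)\,ds+\Big(\int_0^tE(s)\,ds\Big)f(u(t)).$$
The first term is handled by $\Vert AE(t-s)\Vert(t-s)^{\vartheta}$, which is integrable for the Hölder exponent $\vartheta$. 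Near $s=0$, where $f(u)$ is singular, I use the $A^\delta$-weighted bound of the previous step. For the second term, $A\int_0^tE(s)\,ds=I-F(t)$ is bounded. Combined with $AF(t)u_0$ being continuous into $X$ for $t>0$, this gives $u\in\mathcal C((0,T_0];D(A))$. If the Hölder step proves delicate, I would instead bootstrap: first move $u$ into $D(A^{\delta'})$ for $\delta'$ close to $1$, then use that $f(u)$ is Lipschitz in time.
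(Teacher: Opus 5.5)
Your proposal is correct and follows essentially the same route as the paper. It uses a Yagi-type contraction for $\Phi$ in a time-weighted space, followed by the same splitting $\int_0^t E(t-s)\bigl(f(u(s))-f(u(t))\bigr)\,ds+\bigl(\int_0^t E(s)\,ds\bigr)f(u(t))$ with $A\int_0^t E(s)\,ds=I-F(t)$ and a H\"older estimate for $u$ in $D(A^{\gamma_2})$ to reach $D(A)$. The only real difference is bookkeeping: you build the fixed-point space directly on $D(A^\gamma)$ and $D(A^\delta)$ with $\delta>\max(\gamma,\gamma_2)$ and interpolate (which also covers $\gamma_2<\gamma$), so continuity into $D(A^\gamma)$ and the dependence of $T_0$ on $\Vert A^\gamma u_0\Vert$ come for free, whereas the paper works with the pair $(\gamma_1,\gamma_2)$ and recovers the $D(A^\gamma)$ bound and the H\"older estimate (via Gronwall) afterwards.
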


\begin{remark}
    Compared with \cite[Theorem 3.1]{AK19}, Theorem \ref{thm:localWelPos}, which relies on Assumption \ref{ass:Lips_L2}, is more realistic in the context of initial data with low regularity, while still yielding the same regularity for the mild solution.
    The parameter $\gamma_1$ represents the minimal regularity of the initial data required by Theorem \ref{thm:localWelPos}, and minimizing this value is often desirable. A concrete instance is provided in \cite[Example 1]{HERK25}, where the nonlinearity $f(u)=u^m\:(m \geq 2)$ yields the specific exponents $\gamma_1=\frac{d(m-2)}{4(m-1)}$ and $\gamma_2=\frac{d}{4}+\varepsilon$.  
\end{remark}

\begin{remark}
    Theorem \ref{thm:localWelPos} only establishes the local well-posedness of mild solutions. 
    To ensure that mild solution $u$ satisfies the original equation \eqref{eq_main}, it requires the additional condition $u \in \mathcal{C}^\alpha([0,T_{u_0}];X)$. Achieving this regularity, however, calls for additional nonlinear assumptions, e.g., Assumption \ref{ass:Lips_Agamma}. Furthermore, the existence of global solutions depends on the specific form of the nonlinearity and is not addressed here. For simplicity, we henceforth assume that equation \eqref{eq_main} admits a global solution. 
\end{remark}

Turning to the numerical analysis of the semilinear problem \eqref{eq_main}, a Lipschitz condition on the nonlinearity is instrumental for establishing stability for the numerical scheme \eqref{NumSch_yn}. 
However, condition \eqref{Lips_L2} is insufficient for an error analysis in the $L^2$ space. 
To overcome this limitation, we employ a duality argument which relaxes the restriction on $\gamma_2$ and leverages enhanced regularity (replacing $\gamma_1$ with $\gamma$). These considerations lead us to posit the following assumption. 

\begin{assumption}\label{ass:Lips_Agamma}
    For $\mu\in [0,\gamma]$, we assume the existence of a regularity parameter $\hat{\gamma}_{\mu} \in (0,1)$ with $\hat{\gamma}_\mu+\mu<1$, such that the nonlinear term $f(u)$ in \eqref{eq_main} satisfies the local Lipschitz condition
    \begin{equation}\label{Lips_Agamma}
        \Vert A^{-\hat{\gamma}_\mu}(f(u)-f(v))\Vert \leq  \varphi(\Vert A^{{\gamma}}u\Vert+ \Vert A^{{\gamma}}v\Vert) \Vert A^{\mu}(u-v)\Vert, \quad \text{for } u,v\in D(A^\gamma), 
    \end{equation}
    where $\varphi(\cdot)$ is an increasing continuous function.
\end{assumption}

\begin{remark}
    In Assumption \ref{ass:Lips_Agamma},  the parameter $\mu$ indicates that the error analysis is carried out in the space $D(A^\mu)$. Upon applying the Lipschitz condition \eqref{Lips_Agamma}, the residual operator $A^{\hat{\gamma}_\mu}$ can be absorbed by the smoothing properties of the solution operators $F(t)$ and $E(t)$ (see Lemma \ref{le:smoo_solOpe}).
\end{remark}

By utilizing the solution regularity \eqref{sol_regular} in conjunction with the Lipschitz condition \eqref{Lips_Agamma} for the case $\mu=\gamma$, we derive the uniform bound
\begin{equation}\label{est_f_Agamma}
    \Vert A^{-\hat{\gamma}_\gamma}f(u(t))\Vert\leq \Vert A^{-\hat{\gamma}_\gamma}(f(u(t))-f(0))\Vert+\Vert  A^{-\hat{\gamma}_\gamma}f(0) \Vert \leq  C, \quad \hat{\gamma}_\gamma+\gamma<1,\: 0\leq t\leq T. 
\end{equation}
To facilitate the estimation of higher-order temporal derivatives of the nonlinear term---crucial for analyzing the local error of the numerical scheme \eqref{NumSch_yn} --- we invoke the chain rule. For any integer $m\geq 1$, this yields
\begin{equation}\label{chainRule}
\begin{aligned}
    \frac{\text{d}^m}{\text{d} t^m}f(u(t)) &= C_1 f'(u(t))  \partial_t^m u(t)
    \\
    & \quad + \sum_{i=2}^{m}\:\:  \sum_{\substack{s_1+\cdots+s_i=m\\ s_i\in \mathbb{N}^+}}  C_i f^{(i)}(u(t)) \partial_t^{s_1} u(t)...\partial_t^{s_i} u(t).
\end{aligned}
\end{equation}
Motivated by the analysis in \cite[Assumption 4]{HERK25}, we introduce the following simplifying hypothesis to control the growth of these derivatives. 

\begin{assumption}\label{ass:FrechetDiff}
    We assume that the nonlinear term $f(u)$ in \eqref{eq_main} is $k$-times Fréchet differentiable with respect to $u$.  
    For $1\leq m\leq k$, if the solution of equation \eqref{eq_main} satisfies the estimate
    \begin{equation*}
        \Vert A^s \partial^{\ell}_t u(t)\Vert \leq C t^{-\ell+\alpha (\gamma-s)}+C,\quad 0\leq s\leq \gamma,\: 1\leq\ell\leq m-1, 
    \end{equation*}
    then we postulate the existence of a regularity parameter $\gamma_4$ with $\gamma+\gamma_4<1$, such that the first term on the right-hand side of \eqref{chainRule} dominates; i.e.,
    \begin{equation}
        \left\Vert A^{-\gamma_4}\frac{\text{d}^m}{\text{d} t^m}f(u(t))\right\Vert \leq C\Vert \partial^m_t u(t)\Vert + C t^{-m+\alpha\gamma}.
    \end{equation}
\end{assumption}

\begin{remark}
    Assumption \ref{ass:FrechetDiff} pertains to the structural properties of the Fréchet derivatives of the nonlinear function $f(\cdot)$ and plays a pivotal role in the inductive proof of Proposition \ref{pro:solRegu}. It is justified by examples such as polynomial nonlinearities, as discussed in \cite[Example 4]{HERK25}. 
\end{remark}

Equipped with the aforementioned hypotheses, we proceed to establish fundamental estimates for the derivatives of both the solution $u(t)$ and the nonlinear term $f(u(t))$. 
Unlike the bound $\|\partial_t^k u(t)\|_{L^\infty}\le C t^{-m}$ in \cite[Theorem 3.2]{ECQ22}, the following estimate \eqref{est_Dtf} is obtained in the weaker space $D(A^{-\gamma_4})$ and exhibits a milder singularity. 
\begin{proposition}\label{pro:solRegu}
    Let the nonlinear term $f(u)$ satisfy Assumptions \ref{ass:Lips_L2}--\ref{ass:FrechetDiff}. If $u(t)$ is the solution of problem \eqref{eq_main}, then $u(t) \in \mathcal{C}^k((0, T]; X)$ and the following estimates hold: 
    \begin{align}
        \Vert A ^\theta \partial^{m}_t u&(t)\Vert \leq C t^{-m+\alpha (\gamma-\theta)}+C, \quad  0\leq \theta\leq d/4,\:1\leq m \leq k, \label{est_Athe_Dtu}
        \\
        &\left\Vert A^{-\gamma_4}\frac{\text{d}^m}{\text{d} t^m}f(u(t))\right\Vert \leq C t^{-m+\alpha\gamma}, \quad 1\leq m \leq k. \label{est_Dtf}
    \end{align}
\end{proposition}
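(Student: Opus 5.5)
We argue by induction on $m$, working with the mild formulation \eqref{eq_mildSolu} and the smoothing estimates of Lemma \ref{le:smoo_solOpe}. These are assumed in the standard form $\Vert A^{\theta}F(t)\Vert\le Ct^{-\alpha\theta}$ for $0\le\theta\le1$, and $\Vert A^{\theta}E(t)\Vert\le Ct^{-1+\alpha(1-\theta)}$ for $\theta<1$, together with the analogous bounds for the time derivatives, $\Vert A^{\theta}F^{(m)}(t)\Vert\le Ct^{-m-\alpha\theta}$ and $\Vert A^{\theta}E^{(m)}(t)\Vert\le Ct^{-m-1+\alpha(1-\theta)}$.

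\textbf{Splitting the solution.} We write $u=F(t)u_0+w(t)$ with $w(t)=\int_0^tE(t-s)f(u(s))\,\mathrm ds$. For the linear part, $u_0\in D(A^\gamma)$ gives
\begin{equation*}
\Vert A^\theta\partial_t^mF(t)u_0\Vert=\Vert A^{\theta-\gamma}F^{(m)}(t)A^\gamma u_0\Vert\le Ct^{-m+\alpha(\gamma-\theta)}.
\end{equation*}
This already has the form in \eqref{est_Athe_Dtu}; when $\theta<\gamma$ we use the negative-order smoothing bound. For the convolution part we follow \cite[Theorem 3.2]{ECQ22}. We multiply by $t^m$, or equivalently use the splitting $t=(t-s)+s$ repeatedly, to move derivatives onto $f(u)$. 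Concretely, we differentiate the identity
\begin{equation*}
t\,w(t)=\int_0^t (t-s)E(t-s)f(u(s))\,\mathrm ds+\int_0^tE(t-s)\,s f(u(s))\,\mathrm ds.
\end{equation*}
By induction, $t^m\partial_t^m w$ becomes a sum of convolutions $\int_0^t \partial^{a}\big[(t-s)^{b}E(t-s)\big]\,\big(s^{c}\tfrac{\mathrm d^{c}}{\mathrm ds^{c}}f(u(s))\big)\,\mathrm ds$ with $c\le m$. The kernels obey $\Vert A^{\theta+\gamma_4}\,\partial_t^{a}[t^{b}E(t)]\Vert\le Ct^{b-a-1+\alpha(1-\theta-\gamma_4)}$.

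\textbf{Inserting the nonlinear bounds.} In each convolution we use the factorization $I=A^{\gamma_4}A^{-\gamma_4}$. The factor $A^{-\gamma_4}$ acts on $f$, and $A^{\theta+\gamma_4}$ is absorbed by $E$; this is integrable as long as $\theta+\gamma_4<1$, which holds for $\theta\le\gamma$ because $\gamma+\gamma_4<1$. For $c=0$ we use \eqref{est_f_Agamma}, with $\hat\gamma_\gamma$ in place of $\gamma_4$. For $1\le c\le m-1$ we use the induction hypothesis \eqref{est_Dtf}. These terms give contributions $Ct^{\alpha(\gamma-\theta)}$ times Beta-function constants, which is consistent with the target bound. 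The term with $c=m$ is the critical one. By Assumption \ref{ass:FrechetDiff}, which is applicable because the hypothesis on lower derivatives is exactly the induction hypothesis \eqref{est_Athe_Dtu} for $\ell\le m-1$ and $0\le s\le\gamma$, it is bounded by
\begin{equation*}
C\int_0^t (t-s)^{-1+\alpha(1-\theta-\gamma_4)}\big(s^m\Vert\partial_s^m u(s)\Vert+s^{\alpha\gamma}\big)\,\mathrm ds.
\end{equation*}
Setting $\phi(t)=\sup_{s\le t}s^{m-\alpha\gamma}\Vert\partial_s^mu(s)\Vert$ and taking $\theta=0$, this yields a Volterra inequality
\begin{equation*}
t^{m}\Vert\partial_t^mu\Vert\le Ct^{\alpha\gamma}+C\int_0^t(t-s)^{-1+\alpha(1-\gamma_4)}s^m\Vert\partial_s^m u\Vert\,\mathrm ds.
\end{equation*}
A weakly singular Gronwall inequality then gives \eqref{est_Athe_Dtu} for $\theta=0$. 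Feeding this back, the same representation gives it for $0<\theta\le\gamma$. For $\gamma<\theta\le d/4$ we must instead place the excess smoothing on $E$ and on $F$; if needed we use $A^\theta=A^{\theta-\gamma'}A^{\gamma'}$ with the kernel still integrable, using $\theta<1$. With \eqref{est_Athe_Dtu} for order $m$ in hand, Assumption \ref{ass:FrechetDiff} at level $m$ immediately gives \eqref{est_Dtf}. The additive constant $C$ is absorbed since $t\le T$.

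\textbf{Justification and main obstacle.} The qualitative statement $u\in\mathcal C^k((0,T];X)$ is proved alongside the estimates. We do this either by working with difference quotients or by a fixed-point argument for $v=t^m\partial_t^m u$ in a weighted space $\{\sup t^{m-\alpha\gamma}\Vert v\Vert<\infty\}$ on short intervals, which is then extended. The main obstacle is the critical term $c=m$. The Gronwall argument requires the a priori finiteness of the weighted quantity $\phi$ near $t=0$, and it requires the kernel exponent $-1+\alpha(1-\gamma_4)$ to be strictly greater than $-1$. For this we first try the regularised problem with smoothed data $u_0^\epsilon$, for which $\phi$ is finite, obtain bounds uniform in $\epsilon$, and then pass to the limit using the Lipschitz continuity from Assumption \ref{ass:Lips_Agamma}. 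A secondary delicate point is bookkeeping the range $\theta\in(\gamma,d/4]$ while keeping every kernel integrable.
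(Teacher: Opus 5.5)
Your argument is essentially the paper's proof. It uses the weighted representation of $\partial_t^m$ from \cite[Theorem 3.2]{ECQ22}, the factorization $I=A^{\gamma_4}A^{-\gamma_4}$ (with $\hat\gamma_\gamma$ for the undifferentiated $f$), Assumption~\ref{ass:FrechetDiff} for the top-order term, a weakly singular Gronwall inequality, and induction on $m$. The differences are bookkeeping. You put the weight $t^m$ only on the convolution part and bound $A^\theta\partial_t^m F(t)u_0$ directly through $\widetilde F^{(m)}$. The paper instead weights the whole solution by $t^{m-1}$, so that $\partial_t^m(t^{m-1}u_0)=0$ and the data term already produces $t^{-1+\alpha(\gamma-\theta)}$. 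You run Gronwall at $\theta=0$ and only then feed the bound back for $\theta>0$. This is more careful than the paper's Gronwall in $\Vert A^\theta\cdot\Vert$: there the forcing $t^{-1+\alpha(\gamma-\theta)}$ is not integrable at $\theta=\gamma$, which is exactly the endpoint the hypothesis of Assumption~\ref{ass:FrechetDiff} needs in the induction.

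The step that fails as you sketch it is the extension to $\gamma<\theta\le d/4$. The top-order term $\frac{\mathrm d^m}{\mathrm dt^m}f(u)$ is controlled only in $D(A^{-\gamma_4})$. Any factorization therefore leaves the kernel carrying $A^{\theta+\gamma_4}E(s)$ (or $A^{\theta+\gamma_4}\partial_s^j(s^jE(s))$), which is integrable at $s=0$ only if $\theta+\gamma_4<1$. The condition $\theta<1$ is not enough. The hypotheses give only $\gamma_4<1-\gamma$, so $1-\gamma_4$ may lie below $d/4$.

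So your argument proves \eqref{est_Athe_Dtu} for $0\le\theta\le\gamma$, but not on the full stated range. The paper's argument has the same limitation, since its kernels $(t-s)^{-1+\alpha(1-\gamma_4-\theta)}$ silently require the same restriction. Only $0\le\theta\le\gamma$ is used afterwards: in the hypothesis of Assumption~\ref{ass:FrechetDiff}, hence in \eqref{est_Dtf} and Theorem~\ref{thm:convergence}. You should therefore state that restriction explicitly rather than attempt the $A^{\theta-\gamma'}A^{\gamma'}$ reshuffling.
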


Building upon these preparatory results, we now state the primary contribution of this paper.
\begin{theorem}\label{thm:convergence}
    Let $u_0\in D(A^{\gamma})$ and $\mu\in [0,\gamma]$ with $\gamma\in (\gamma_1,d/4]$. Let $u(t)$ and $u_n$ be the solution of problem \eqref{eq_main} and the discrete scheme \eqref{NumSch_un}--\eqref{NumSch_yn} at time $t_n$, respectively. If the nonlinear term $f(u)$ satisfies Assumptions \ref{ass:Lips_L2}--\ref{ass:FrechetDiff} and the parameter $\nu$ in the stepsize choice \eqref{def_gradeMesh} satisfies $1-(1+\alpha \gamma)/
    k < \nu < 1$,     
    then there exist $\tau^*>0$ and $M^*>0$ such that for $\tau<\tau^*$ and $M>M^*$, we have 
    \begin{equation}\label{res_conve}
        \Vert u(t_n)-u_n\Vert_\mu\leq C \tau^{k} t_n^{-\eta}+ C e^{-\sqrt{M}/C},
    \end{equation}
    where $\eta=\alpha(\gamma_4+\mu-1)+k-\alpha\gamma-k\nu>0$.
\end{theorem}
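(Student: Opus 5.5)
We follow the error decomposition of \cite{ECQ22}, carried out in the fractional domain space $D(A^\mu)$. We compare the exact representation \eqref{eq_quadAppro} with the scheme \eqref{NumSch_un}--\eqref{NumSch_yn} and define $e_n=u(t_n)-u_n$. This gives
\begin{equation*}
e_n=\mathcal{E}_{1,q}(t_n)+\mathcal{E}_{2,q}(t_n)+\sum_{j=-M}^{M}\tilde w_j(\tilde z_j^\alpha+A)^{-1}\big(y(\tilde z_j,t_n)-y_n(\tilde z_j)\big).
\end{equation*}
The quadrature errors are handled by the known spectral estimates for the hyperbolic contour. Measured in $\Vert\cdot\Vert_\mu$ with $\mu\le\gamma$, these use $u_0\in D(A^\gamma)$ and the bound \eqref{est_f_Agamma}; since $\mu+\hat\gamma_\gamma<1$, the factor $A^{\mu+\hat\gamma_\gamma}(z^\alpha+A)^{-1}$ is integrable along the contour. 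They contribute $Ce^{-\sqrt M/C}$. Next we unroll the ODE recursion for $y-y_n$ and resum the quadrature. The discrete part is then approximately a discrete convolution of the form
\begin{equation*}
\sum_{m=1}^{n}\int_{t_{m-1}}^{t_m}E(t_n-s)\Big[\mathcal{E}_f^m(s)+\sum_{i}L_{mi}(s)\big(f(u(t_{m-i}))-f(u_{m-i})\big)\Big]\mathrm{d}s,
\end{equation*}
up to further quadrature errors, with the starting steps $m<k$ handled by constant extrapolation.

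Consistency term. We write $\mathcal{E}_f^m(s)$ through the Peano kernel of the $k$-point extrapolation and apply $A^{-\gamma_4}$. By \eqref{est_Dtf} we get $\Vert A^{-\gamma_4}\mathcal{E}_f^m(s)\Vert\le C\tau_m^{k}\, t_{m-1}^{-k+\alpha\gamma}$ for $m\ge 2$. On the first $k$ steps we use instead the cruder bound $\Vert A^{-\gamma_4}\tfrac{d}{dt}f\Vert\le Ct^{-1+\alpha\gamma}$, which yields $C\tau_1^{\alpha\gamma}$-type quantities. We then use the smoothing estimate of Lemma \ref{le:smoo_solOpe}, $\Vert A^{\mu+\gamma_4}E(t)\Vert\le Ct^{\alpha(1-\mu-\gamma_4)-1}$, which is integrable because $\mu+\gamma_4\le\gamma+\gamma_4<1$. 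Inserting the mesh \eqref{def_gradeMesh}, i.e. $\tau_m\simeq\tau t_{m-1}^\nu$ and $\tau_1\simeq\tau^{1/(1-\nu)}$, and splitting the sum into $s\le t_n/2$ and $s>t_n/2$, gives a bound of the form $C\tau^k t_n^{-\eta}$. Here $t_m^{k\nu-k+\alpha\gamma}$ is integrable near $0$ exactly when $\nu>1-(1+\alpha\gamma)/k$. The kernel exponent $\alpha(1-\mu-\gamma_4)-1$ combines with this to give $\eta=\alpha(\gamma_4+\mu-1)+k-\alpha\gamma-k\nu$. The initial steps contribute $\tau_1^{1+\alpha\gamma}\lesssim\tau^k$ by the same constraint on $\nu$.

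Stability term. This is the main obstacle, because the nonlinearity cannot be handled in $L^2$ with a uniform Lipschitz constant. We argue by induction on $n$ with a bootstrap hypothesis $\Vert u_m\Vert_\gamma\le 2\sup_t\Vert u(t)\Vert_\gamma$ for $m<n$. The case $\mu=\gamma$ is run first to secure this bound, using $\tau<\tau^*$. We then apply Assumption \ref{ass:Lips_Agamma}: $\Vert A^{-\hat\gamma_\mu}(f(u(t_m))-f(u_m))\Vert\le C\Vert e_m\Vert_\mu$, and absorb $A^{\mu+\hat\gamma_\mu}$ into $E(t_n-s)$. This gives
\begin{equation*}
\Vert e_n\Vert_\mu\le C\tau^k t_n^{-\eta}+Ce^{-\sqrt M/C}+C\sum_{m<n}\tau_{m+1}(t_n-t_m)^{\alpha(1-\mu-\hat\gamma_\mu)-1}\Vert e_m\Vert_\mu .
\end{equation*}
The difficulty lies in closing this with a discrete fractional Gronwall inequality on the nonuniform mesh that preserves the weight $t_n^{-\eta}$. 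We will first try the approach of \cite{ECQ22}: verify directly, via the same two-region splitting, that $\sum_m\tau_{m+1}(t_n-t_m)^{\beta-1}t_m^{-\eta}\le C t_n^{\beta-\eta}$ whenever $\eta<1$. If $\eta\ge1$, we will use the weighted-norm trick of multiplying by $t_n^{\eta}$ and exploiting the small-step bound near $0$, together with the spectral accuracy requiring $M>M^*$. Both routes reduce the problem to a generalized Gronwall lemma for weakly singular kernels, which closes the induction and yields \eqref{res_conve}.
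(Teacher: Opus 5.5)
Your proposal is correct and is essentially the paper's proof. The paper organizes your resummation step by introducing the intermediate solution $u_n^*$ (exact contour integrals applied to the numerical $y_n$), so that $u(t_n)-u_n^*$ is your continuous convolution error and $u_n^*-u_n$ is a quadrature error bounded via Lemma~\ref{le:err_q2} with $\theta_1=\hat\gamma_\gamma$, $\theta_2=0$; the consistency bound, the Lipschitz step through Assumption~\ref{ass:Lips_Agamma}, the bootstrap via $\mu=\gamma$, and the Gronwall lemma \cite[Lemma B.1]{ECQ22} all coincide with yours. Your $\eta\ge 1$ contingency is vacuous: the mesh condition gives $k-k\nu-\alpha\gamma<1$, hence $\eta<1+\alpha(\gamma_4+\mu-1)<1$ because $\mu+\gamma_4\le\gamma+\gamma_4<1$.
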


\begin{remark}
    The error estimate \eqref{res_conve} admits several potential extensions.
    First, refined choices of the auxiliary parameters in the proof lead to alternative forms of the estimate; see Remark \ref{rem:extension} for details. 
    Second, the estimate in the $H^1$ norm can be obtained by taking $\mu=1/2$. 
    When $\gamma\geq 1/2$, this follows directly from Theorem \ref{thm:convergence}; when $\gamma<1/2$, the argument can be extended to cover this choice of $\mu$, following the analogous approach in \cite[Theorem 3]{ERK_FNM}. 
    This aspect is related to the work \cite{HS20}, which established $H^1$-norm error estimates for the L1 scheme, albeit under the stronger regularity assumption $u_0\in H^4\cap H^1_0$.
\end{remark}

\section{Proof of the main results}\label{sec:proof_res}
In this section, we prove the primary convergence theorem. The error analysis is divided into two parts: first, a regularity analysis of the mild solution, and second, the error estimation itself. These two parts are presented in the following two subsections, respectively. 

\subsection{Regularity of the solution}
This subsection investigates the positive-time regularity of the nonlinear problem \eqref{eq_mildSolu}. Our starting point is an examination of the smoothing properties inherent to the solution operators $F(t)$ and $E(t)$.

Observing that $A(z^\alpha + A)^{-1} = I - z^\alpha(z^\alpha + A)^{-1}$, we decompose the operator $F(t)$ as follows:
\begin{equation}\label{def_tilF}
\begin{aligned}
    F(t)u_0
    &= u_0 - \frac{1}{2\pi i}\int_{\Gamma_{\lambda}} e^{zt}z^{-1} A^{1-\gamma}(z^\alpha+A)^{-1} A^\gamma u_0 \text{d}z 
    \\
    &=: u_0- \widetilde{F}(t)  A^\gamma u_0. 
\end{aligned}
\end{equation}
The operator $\widetilde{F}(t)$ satisfies $A^\gamma \widetilde{F}(t) = \int_0^t A E(s) \text{d} s$. Moreover, by operator interpolation theory we also have the resolvent estimate
\begin{equation}\label{resol_Atheta}
    \Vert A^{\theta} (z^\alpha+A)^{-1} \Vert\leq |z|^{-\alpha(1-\theta)}, \quad 0\leq \theta \leq 1.
\end{equation}

Combining the results from \cite[Theorem 1.6]{JZ23} and \cite[Lemma 3.4(i)]{JL18} with operator interpolation theory yields the following estimates.

\begin{lemma} \label{le:smoo_solOpe}
    For $\ell \in \mathbb{N}$ and $t\in(0,T]$, the operators $F(t)$, $E(t)$, and $\widetilde{F}(t)$ satisfy the following bounds:
    \begin{align}
        \quad \quad  \Vert A^\theta \widetilde{F}(t)\Vert&\leq C,    &&0\leq \theta \leq \gamma,\quad \quad & \label{est_A_tilF}
        \\
        \Vert A^{\theta} F^{(\ell)}(t)\Vert&\leq C t^{-\ell-\alpha\theta},    &&0\leq \theta \leq 1,& \label{est_AFl}
        \\
        \Vert A^\theta E^{(\ell)}(t)\Vert &\leq C t^{-1-\ell+\alpha(1-\theta)},\quad  &&0\leq \theta \leq 2, &\label{est_AEl}
        \\
        \Vert A^\theta \widetilde{F}^{(\ell+1)}(t)\Vert  &\leq C t^{-\ell-1+\alpha (\gamma-\theta)},  &&0\leq \theta \leq 1. &\label{est_A_tilFl}
    \end{align}
\end{lemma}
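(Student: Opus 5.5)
The plan is to prove each bound from the contour representations \eqref{def_solOpera} and \eqref{def_tilF}, deforming onto a Hankel-type contour adapted to $t$. For $t\in(0,T]$ we replace $\Gamma_\lambda$ by $\Gamma_{t}=\{z:|z|=1/t,\ |\arg z|\le\varphi\}\cup\{re^{\pm i\varphi}: r\ge 1/t\}$. This is legitimate because the integrands are analytic in $\Sigma_\varphi$ away from the origin (for $\widetilde F$ the factor $z^{-1}$ is harmless since the arc stays away from $0$). The resolvent bound \eqref{resol_Atheta} is the only operator input. Differentiating in $t$ under the integral contributes a factor $z^\ell$, and on $\Gamma_t$ we have $|e^{zt}|\le e^{-c|z|t}$ on the rays and $\le e$ on the arc.

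\textbf{Bounds \eqref{est_AFl} and \eqref{est_AEl}.} For \eqref{est_AFl} we estimate
\[
\|A^\theta F^{(\ell)}(t)\|\le C\int_{\Gamma_t}|e^{zt}|\,|z|^{\ell+\alpha-1}|z|^{-\alpha(1-\theta)}|\mathrm dz|=C\int_{\Gamma_t}|e^{zt}||z|^{\ell-1+\alpha\theta}|\mathrm dz|,
\]
and the substitution $s=|z|t$ gives $Ct^{-\ell-\alpha\theta}$. Bound \eqref{est_AEl} follows the same way for $0\le\theta\le1$, giving $t^{-1-\ell+\alpha(1-\theta)}$. For $1<\theta\le2$ we write $A^\theta(z^\alpha+A)^{-1}=A^{\theta-1}\bigl(I-z^\alpha(z^\alpha+A)^{-1}\bigr)$. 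The identity part integrates to zero against $e^{zt}z^\ell$ for $t>0$, since it is an entire function whose contour integral vanishes. The remaining part has norm bounded by $|z|^\alpha\cdot|z|^{-\alpha(2-\theta)}=|z|^{\alpha(\theta-1)}$, and this again gives the claimed power. Alternatively, one can cite \cite[Theorem 1.6]{JZ23} at the endpoints $\theta=0,1,2$ and interpolate.

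\textbf{Bounds for $\widetilde F$.} For \eqref{est_A_tilFl}, differentiating $\ell+1\ge1$ times cancels $z^{-1}$. We then use
\[
\|A^{\theta+1-\gamma}(z^\alpha+A)^{-1}\|\le C|z|^{-\alpha(\gamma-\theta)},
\]
which is valid by \eqref{resol_Atheta} when $\theta+1-\gamma\le1$, that is $\theta\le\gamma$. For $\gamma<\theta\le1$ we use the same splitting trick as above. The integrand is then of order $|z|^{\ell-\alpha(\gamma-\theta)}$, so the integral is $Ct^{-\ell-1+\alpha(\gamma-\theta)}$. Equivalently, $A^\gamma\widetilde F^{(\ell+1)}=AE^{(\ell)}$, and the claim follows from \eqref{est_AEl}.

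\textbf{Bound \eqref{est_A_tilF}, the main obstacle.} Here no derivative absorbs $z^{-1}$, so the small-$|z|$ region must be handled and the bound must be uniform. I would use the identity $A^\theta\widetilde F(t)=A^{\theta-\gamma}\int_0^tAE(s)\,\mathrm ds$ together with $\int_0^tAE(s)\,\mathrm ds=I-F(t)$. The latter follows from the Laplace transform: $z^{-1}A(z^\alpha+A)^{-1}=z^{-1}-z^{\alpha-1}(z^\alpha+A)^{-1}$. This gives
\[
A^\theta\widetilde F(t)=A^{\theta-\gamma}(I-F(t)),\qquad \|A^\theta\widetilde F(t)\|\le\|A^{\theta-\gamma}\|\,(1+\|F(t)\|)\le C,
\]
since $\theta-\gamma\le0$, $A^{-1}$ is bounded (assuming $A$ is positive definite, as for Dirichlet conditions), and $\|F(t)\|\le C$ by \eqref{est_AFl} with $\theta=\ell=0$. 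Justifying that the contour $\Gamma_\lambda$ in \eqref{def_tilF} can be used consistently (with $0$ to the left of the contour, so the $z^{-1}$ pole yields the $u_0$ term) is the delicate point I would check carefully.
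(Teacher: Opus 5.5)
Your argument is correct, but it is more self-contained than the paper's. The paper gives no derivation of its own: it quotes the integer-order smoothing estimates from \cite[Theorem 1.6]{JZ23} and \cite[Lemma 3.4(i)]{JL18} and appeals to operator interpolation for the intermediate exponents.

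You instead do three things. You build the fractional power into the resolvent bound \eqref{resol_Atheta} and run one $t$-dependent Hankel-contour estimate for all $\theta$. You reach $\theta\in(1,2]$ through $A^\theta(z^\alpha+A)^{-1}=A^{\theta-1}-z^\alpha A^{\theta-1}(z^\alpha+A)^{-1}$, where the entire part integrates to zero. And you reduce the $\widetilde F$ bounds to those for $F$ and $E$ via $A^\gamma\widetilde F=I-F$ and $A^\gamma\widetilde F^{(\ell+1)}=AE^{(\ell)}$.

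The citation route is shorter. However, it leaves the reader to check that the quoted results really cover $\theta$ up to $2$ for every derivative of $E$, as well as the shifted exponent in \eqref{est_A_tilFl}. Your route verifies everything from the single resolvent estimate.

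Your ``main obstacle'' \eqref{est_A_tilF} is not actually one. Since $|z|\ge 1/t$ on $\Gamma_t$, the direct contour bound with $\|A^{\theta+1-\gamma}(z^\alpha+A)^{-1}\|\le C|z|^{-\alpha(\gamma-\theta)}$ already gives $\|A^\theta\widetilde F(t)\|\le Ct^{\alpha(\gamma-\theta)}\le CT^{\alpha(\gamma-\theta)}$. Your identity $A^\theta\widetilde F(t)=A^{\theta-\gamma}(I-F(t))$ is also fine: it is just \eqref{def_tilF} rearranged. The ``delicate point'' you flag is settled because $\Gamma_\lambda$ crosses the real axis at $\lambda(1-\sin\beta)>0$, so the $z^{-1}$ term does return $u_0$.

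Two routine points should be added. First, differentiate under the integral on a contour fixed for $t$ near the given value, and only then deform to $\Gamma_t$, since $\Gamma_t$ moves with $t$. Second, carry out the splitting step on the dense set $D(A^{\theta-1})$ and extend by closedness of $A^\theta$.
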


We now proceed to demonstrate the existence of a local solution for problem \eqref{eq_mildSolu}, which follows the proof strategies of \cite[Theorem 4.1]{ABsPEE}. 

\begin{proof}[Proof of Theorem \ref{thm:localWelPos}]
    For any $S \in (0, T]$, we introduce the Banach space
    $$
        \mathcal{X}(S) = \left\{ v \in \mathcal{C}((0, S]; {D}(A^{\gamma_2})) \cap \mathcal{C}([0, S]; {D}(A^{\gamma_1})); \sup_{0 < t \leq S} t^{({\gamma_2} - {\gamma_1})\alpha} \| A^{\gamma_2} v(t) \| < \infty \right\},$$
    equipped with the norm
    $$
        \| v \|_{\mathcal{X}(S)} = \sup_{0 < t \leq S} t^{({\gamma_2} - {\gamma_1})\alpha} \| A^{\gamma_2} v(t) \| + \sup_{0 \leq t \leq S} \| A^{\gamma_1} v(t) \|.$$
    Within $\mathcal{X}(S)$ we consider a bounded subset $\mathcal{K}(S)$ whose radius $R>0$ will be chosen later. Clearly, $\mathcal{K}(S)$ is a nonempty closed subset of $\mathcal{X}(S)$. 
    For $v\in\mathcal{K}(S)$, we define the mapping
    $$
        \{\Phi v\}(t)=F(t) u_0+\int_0^t E(t-s)f(v(s))\text{d}s,\quad 0\leq t\leq S.$$

    \textbf{Step 1.} We first show that $\Phi$ maps $\mathcal{K}(S)$ into itself. For $v \in \mathcal{K}(S)$, the Lipschitz condition \eqref{Lips_L2} implies
    \begin{equation*}
        \|f(v(t))\| \leq C_{R}(t^{({\gamma_1} - {\gamma_2})\alpha} + 1), \quad 0 < t \leq S.
    \end{equation*}   
    For any $\theta$ satisfying ${\gamma_1} \leq \theta < 1$, utilizing estimates \eqref{est_AFl} and \eqref{est_AEl} yields
    \begin{equation}\label{Atheta_Phi}
    \begin{aligned}
        t^{(\theta - {\gamma_1})\alpha} \|A^\theta \{\Phi v\}(t)\| 
        & \leq  C_{\theta} \Vert A^{\gamma_1} u_0 \Vert + t^{(\theta - {\gamma_1})\alpha} \int_0^t (t - s)^{-1+\alpha(1-\theta)} C_{R}(s^{({\gamma_1} - {\gamma_2})\alpha} + 1) \text{d}s 
        \\
        &\leq  C_{\theta} \Vert A^{\gamma_1} u_0 \Vert + C_{\theta, R} t^{(1-{\gamma_2})\alpha} + C_{\theta, R} t^{(1-{\gamma_1})\alpha}.
    \end{aligned}
    \end{equation}
    Choosing $\theta = {\gamma_1}$ and $\theta = {\gamma_2}$ in \eqref{Atheta_Phi}, we observe that if $R$ is taken such that
    $$
        R > (C_{{\gamma_1}}+C_{{\gamma_2}}) \|A^{\gamma_1} u_0\|, $$
    and if $S$ is sufficiently small, then $\Vert \Phi(v)\Vert_{\mathcal{X}(S)}<R $.  
    Together with the continuity of the operators $E(t)$ and $F(t)$, this implies that $\Phi$ maps $\mathcal{K}(S)$ into itself.

    \textbf{Step 2.} Next we verify that $\Phi$ is a contraction on $\mathcal{X}(S)$. Let $v_1, v_2 \in \mathcal{K}(S)$ be arbitrary. From Lipschitz condition \eqref{Lips_L2} and estimate \eqref{est_AEl}, we deduce
    \begin{align*}
        t^{(\theta-{\gamma_1})\alpha} \Vert A^{\theta}[\{\Phi v_1\}(t)-\{\Phi v_2\}(t)]\Vert & \leq  C_{\theta,R} t^{(\theta-{\gamma_1})\alpha}\int_{0}^{t}(t-s)^{-1+\alpha(1-\theta)} \Vert A^{\gamma_2}[v_1(s)-v_2(s)]\Vert \text{d}s 
        \\
        & \leq C_{\theta,R} S^{\alpha(1-{\gamma_2})} \Vert v_1-v_2\Vert_{\mathcal{X}(S)}, \quad {\gamma_1} \leq \theta < 1.
    \end{align*}
    Evaluating these estimates at $\theta={\gamma_1}$ and $\theta={\gamma_2}$ gives
    $$
        \|\Phi v_1-\Phi v_2\|_{\mathcal{X}(S)}\leq C_{\theta,R} S^{\alpha(1-{\gamma_2})} \Vert v_1-v_2\Vert_{\mathcal{X}(S)},$$
    demonstrating that $\Phi$ is a contraction provided $S>0$ is sufficiently small. The contraction mapping theorem thus guarantees a unique fixed point, corresponding to the unique solution of problem \eqref{eq_mildSolu}. 

    \textbf{Step 3.} We now improve the regularity of the obtained solution. From the arguments in Steps 1 and 2, the local existence time $T_{u_0} = S > 0$ depends solely on $\Vert A^{\gamma_1} u_0\Vert$.  Adapting the techniques from \cite[Theorem 4.2]{ABsPEE} alongside Lemma \ref{le:smoo_solOpe}, we derive
    $$  
        \Vert A^{\gamma_2} u(t)\Vert \leq C(t^{\alpha(\gamma-{\gamma_2})}+ 1), \quad  0 < t \leq T_{u_0}.$$
    which immediately leads to the estimate for the nonlinearity:
    \begin{equation}\label{est_f_L2}
        \Vert f(u(t))\Vert\leq C(t^{\alpha(\gamma-\gamma_2)}+1).
    \end{equation}
    Moreover, a direct computation shows
    $$  
        \Vert A^\gamma u(t)\Vert \leq C + \int_0^t (t-s)^{-1+\alpha(1-\gamma)} (s^{\alpha(\gamma-{\gamma_2})}+1) \leq C, $$
    implying $u \in \mathcal{C}([0,T_{u_0}];D(A^\gamma))$. 
    
    To further upgrade the regularity, let $h>0$, $t>0$, and $\theta\in[0,\gamma_2]$. Using \eqref{def_tilF}, Lemma \ref{le:smoo_solOpe}, and \eqref{est_f_L2}, we analyze the difference:
    \begin{align*}
        \Vert A^{{\theta}}(u(t+h)-u(t))\Vert & \leq \Vert A^{\theta} ( \widetilde{F}(t+h)-\widetilde{F}(t)) A^{\gamma}u_0 \Vert + \int_t^{t+h} A^{\theta} E(s)f(u(t+h-s)) \text{d}s
        \\
        &\quad +\int_0^t A^{{\theta}}E(s)(f(u(t+h-s))-f(u(t-s))) \text{d}s 
        \\
        & \leq C \int_t^{t+h} \Vert A^{{\theta}}\widetilde{F}'(s)\Vert \text{d}s  + \int_t^{t+h} s^{-1+\alpha(1-{\theta})}((t+h-s)^{\alpha(\gamma-{\gamma_2})}+1)\text{d}s
        \\
        &\quad + \int_0^{t}  s^{-1+\alpha(1-{\theta})} \Vert A^{\gamma_2} (u(t+h-s)-u(t-s)) \Vert \text{d}s
        \\
        & \leq C t^{-1+\alpha \gamma}h^{1-\alpha\theta} + C t^{-1+\alpha ({1-\theta})} (h^{1+\alpha(\gamma-{\gamma_2})} +h)
        \\
        & \quad + \int_0^{t}  (t-s)^{-1+\alpha(1-{\theta})} \Vert A^{\gamma_2} (u(s+h)-u(s)) \Vert \text{d}s.
    \end{align*}
    Taking $\theta=\gamma_2$ and applying Gronwall's inequality yields
    \begin{equation}\label{est_Agam_uDiff}
        \Vert A^{{\gamma_2}}(u(t+h)-u(t))\Vert \leq  C t^{-1+\alpha\gamma}h^{1-\alpha \gamma_2} + C t^{-1+\alpha ({1-\gamma_2})} (h^{1+\alpha(\gamma-{\gamma_2})}+h).
    \end{equation}
    Applying the operator $A$ to both sides of \eqref{eq_mildSolu}, employing the identity $A^\gamma \widetilde{F}(t) = \int_0^t A E(s) \text{d} s$, and using Lemma \ref{le:smoo_solOpe}, \eqref{est_f_L2}, and \eqref{est_Agam_uDiff}, we obtain
    \begin{align*}
        \Vert A u(t)\Vert &\leq \Vert A F(t)u_0\Vert + \Vert A^\gamma \widetilde{F}(t) f(u(t))\Vert  + \int_0^t A E(t-s) (f(u(s))-f(u(t)))\text{d}s 
        \\
        &\leq Ct^{-\alpha(1-\gamma)}\Vert A^\gamma u_0\Vert + C(t^{-\alpha(\gamma_2-\gamma)}+1) + \int_0^t (t-s)^{-1} \Vert A^{\gamma_2} (u(s)-u(t))\Vert\text{d}s 
        \\
        &\leq Ct^{-\alpha(1-\gamma)}, 
    \end{align*}
    which establishes the regularity $u\in\mathcal{C}((0,T_{u_0}];{D}(A))$. 
\end{proof}

We next derive estimates for the higher-order derivatives of the solution $u(t)$ and the nonlinear term $f(u(t))$, which are indispensable for the subsequent error analysis. 

\begin{proof}[Proof of Proposition \ref{pro:solRegu}]   
    Following \cite[Theorem 3.2]{ECQ22}, for $m\geq 1$ we have
    \begin{equation}\label{def_t_ell_u}
        t^{m-1} u(t)= t^{m-1} {F}(t)u_0+\sum_{j=0}^{m-1} \begin{pmatrix}m-1\\j\end{pmatrix}w_{m-1,j}(t)
    \end{equation}
    with 
    \begin{align}
        w_{m-1,j}(t) &= \int_0^t (t - s)^j E(t - s) s^{m-1-j} f(u(s)) \text{d}s,
        \\
        \partial_{t}^{m-1}w_{m-1,j}(t)&=\int_{0}^{t}\partial_{s}^{j}(s^{j}E(s))\frac{d^{m-1-j}}{dt^{m-1-j}}[(t-s)^{m-1-j}f(u(t-s))]\text{d}s.\label{def_Dt_omega}
    \end{align}
    For the case $j=m-1$, differentiating \eqref{def_Dt_omega} once more and using Assumption \ref{ass:FrechetDiff}, Lemma \ref{le:smoo_solOpe}, and \eqref{est_f_Agamma} yields
    \begin{equation}\label{est_w_ell_1}
    \begin{aligned}
        \Vert A^\theta \partial_{t}^{m}w_{m-1,m-1}(t)\Vert &= \left\Vert  A^\theta \partial_{t} \int_{0}^{t}\partial_{s}^{m-1}(s^{m-1}E(s)) f(u(t-s))\text{d}s \right\Vert
        \\
        &\leq \Vert A^{\hat{\gamma}_\gamma+\theta}\partial_{t}^{m-1}(t^{m-1}E(t)) A^{-\hat{\gamma}_\gamma}f(u_0)\Vert 
        \\
        &\quad+ \int_{0}^{t} \Vert A^{\gamma_4+ \theta} \partial_{s}^{m-1}(s^{m-1}E(s)) A^{-\gamma_4} \frac{d}{dt}f(u(t-s))\Vert \text{d}s
        \\
        &\leq C t^{-1+\alpha(1-\hat{\gamma}_\gamma-\theta)}+ \int_0^t (t-s)^{-1+\alpha(1-\gamma_4)} \Vert \partial_s u(s) \Vert\text{d}s. 
    \end{aligned}
    \end{equation}

    Differentiating \eqref{def_t_ell_u} with $m=1$ and substituting the above estimate, we obtain
    \begin{align*}
        \Vert  A^\theta \partial_t  u(t)\Vert 
        &\leq \Vert  \widetilde{F}'(t) A^\gamma u_0\Vert + \Vert A^\theta \partial_t w_{0,0}(t)\Vert 
        \\
        &\leq  C t^{-1+\alpha (\gamma-\theta)} + C t^{-1+\alpha(1-\hat{\gamma}_\gamma-\theta)}+ C\int_0^t (t-s)^{-1+\alpha(1-\gamma_4)} \Vert \partial_s u(s) \Vert\text{d}s.
        \\
        &\leq  C t^{-1+\alpha (\gamma-\theta)} + C\int_0^t (t-s)^{-1+\alpha(1-\gamma_4)} \Vert A^{\theta} \partial_s u(s) \Vert\text{d}s.
    \end{align*}
    An application of Gronwall's inequality then verifies \eqref{est_Athe_Dtu} for $m=1$. 
    We now proceed by induction, assuming \eqref{est_Athe_Dtu} holds for all $m = 2,\dots, \ell-1$ to prove it for $m = \ell$. 

    We first estimate $\Vert \partial_{t}^{\ell}w_{\ell-1,j}(t)\Vert$ for the remaining cases. For $0<j < \ell-1$, applying Assumption \ref{ass:FrechetDiff} and Lemma \ref{le:smoo_solOpe} gives
    \begin{equation}\label{est_w_ell_2}
    \begin{aligned}
        \Vert A^\theta \partial_{t}^{\ell}w_{\ell-1,j}(t)\Vert &\leq \int_{0}^{t}\Vert A^{\gamma_4+\theta} \partial_{s}^{j}(s^{j}E(s))\Vert \left\Vert A^{-\gamma_4} \frac{d^{\ell-j}}{dt^{\ell-j}}[(t-s)^{\ell-1-j}f(u(t-s))]\right\Vert \text{d}s
        \\
        &\leq \int_0^t s^{-1+\alpha(1-\gamma_4-\theta)} (t-s)^{-1+\alpha\gamma} \text{d}s
        \\
        &\leq Ct^{-1+\alpha(\gamma-\theta)}.
    \end{aligned}
    \end{equation}
    Similarly, for $j=0$,
    \begin{equation}\label{est_w_ell_3}
    \begin{aligned}
        \Vert A^\theta\partial_{t}^{\ell}w_{\ell-1,0}(t)\Vert &\leq \int_{0}^{t}\Vert A^{\gamma_4+\theta} E(s)\Vert \left\Vert A^{-\gamma_4} \frac{d^{\ell}}{dt^{\ell}}[(t-s)^{\ell-1}f(u(t-s))]\right\Vert \text{d}s
        \\
        &\leq Ct^{-1+\alpha(\gamma-\theta)} + \int_{0}^{t} (t-s)^{-1+\alpha(1-\gamma_4-\theta)} \Vert A^\theta s^{\ell-1} \partial_t^{\ell}  u(s)\Vert \text{d}s,
    \end{aligned}
    \end{equation}
    where a change of variables was utilized in the final step. 
    By taking the $l$-th derivative of \eqref{def_t_ell_u} and inserting the estimates \eqref{est_w_ell_1}--\eqref{est_w_ell_3} into it, we obtain
    \begin{equation}\label{est_A_D_tu}
    \begin{aligned}
        \Vert A^\theta \partial_t^\ell(t^{(\ell-1)} u(t))\Vert& \leq\Vert A^\theta \partial_t^\ell( t^\ell\widetilde{F}(t) A^\gamma u_0)\Vert+\sum_{j=0}^\ell
        \begin{pmatrix}\ell\\j\end{pmatrix}
        \Vert A^\theta \partial_t^\ell w_{\ell,j}(t)\Vert
        \\
        &\leq Ct^{-1+\alpha(\gamma-\theta)} + \int_{0}^{t} (t-s)^{-1+\alpha(1-\gamma_4-\theta)} \Vert A^\theta s^{\ell-1} \partial_t^{\ell}  u(s)\Vert \text{d}s.
    \end{aligned}
    \end{equation}
 
    By the product rule we have
    \begin{align*}
        \Vert A^\theta t^{\ell-1}\partial_{t}^{\ell}u(t)\Vert & \leq \Vert A^\theta \partial_{t}^{\ell}(t^{\ell-1}u(t))\Vert  + C\sum_{j=1}^{\ell-1}\Vert A^\theta t^{\ell-j-1}\partial_{t}^{\ell-j}u(t)\Vert  \\
        &\leq \Vert A^\theta \partial_{t}^{\ell-1}(t^{\ell}u(t))\Vert  + C t^{-1+\alpha(\gamma-\theta)},
    \end{align*}
    where the induction hypothesis for \eqref{est_Athe_Dtu} was employed in the last inequality. Combining this with \eqref{est_A_D_tu} implies
    \begin{equation}\label{estAut_Gronw}
        \| A^\theta t^{\ell-1}\partial_{t}^{\ell}u(t)\| \leq Ct^{-1+\alpha(\gamma-\theta)} + \int_{0}^{t}C(t-s)^{-1+\alpha(1-\gamma_4-\theta)}\|A^\theta s^{\ell-1}\partial_{t}^{\ell}u(s)\| \text{d}s.
    \end{equation}
    A final application of Gronwall's inequality establishes \eqref{est_Athe_Dtu}. Consequently, estimate \eqref{est_Dtf} follows directly from Assumption \ref{ass:FrechetDiff}.
\end{proof}

\begin{remark}
    The proof above follows the main steps of \cite[Theorem 3.2]{ECQ22}, with only minor modifications. 
    In particular, the first term on the right-hand side of \eqref{estAut_Gronw} gains an extra factor $\alpha(\gamma-\theta)$ due to the exploitation of the initial regularity, leading to a milder singularity. 
    To accommodate this factor, the exponent in the definition \eqref{def_t_ell_u} is reduced from $t^m$ to $t^{m-1}$.
\end{remark}

\subsection{Error estimates}
We commence the error estimate by establishing the quadrature error arising from the approximation in \eqref{eq_quadAppro}. For the first integral component, the integrand exhibits double-exponential decay. Consequently, invoking \cite[Theorem 2.1]{LF10}, the quadrature converges exponentially with respect to $M$: 
\begin{equation}\label{err_q1}
    \|\mathcal{E}_{1,q}(t)\|_\mu \leq Ce^{-C M/\ln M}\Vert u_0\Vert_\mu \quad \text{for } 0\leq \mu\leq \gamma.
\end{equation}

For the second component, instead of directly estimating the Laplace transform of $f(u(t))$, we follow the technique from \cite[Lemma~3.3]{ECQ22} by incorporating the factor $e^{zt}$.
This adjustment reduces the integrand to a single-exponential decay, which is then handled by the following lemma.

\begin{lemma}\label{le:err_q2}
    For $\mu\in [0,\gamma]$, suppose there exist parameters $\theta_1$ and $\theta_2$ with $0\leq\theta_1< 1-\mu$ and $0\leq \theta_2<1$ such that 
    \begin{equation}\label{ass_f_bound}
        \Vert A^{-\theta_1} f(u(t))\Vert \leq C t^{-\theta_2}, \quad 0<t\leq T.
    \end{equation}
    Then the quadrature error satisfies
    $$
    \|\mathcal{E}_{2,q}(t)\|_{\mu}\leq C t^{-\theta_2}e^{-\sqrt{C \alpha(1-\mu-\theta_1) M}}.$$
\end{lemma}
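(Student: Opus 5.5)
We plan to follow \cite[Lemma~3.3]{ECQ22}, with $L^\infty$ bounds replaced by fractional-power estimates. The second term of \eqref{eq_quadAppro} is a contour integral over $\Gamma_{\tilde\lambda}$. Parametrize it by $z(s)=\tilde\lambda(1-\sin(\beta+is))$, $s\in\mathbb{R}$, so that
\[
\frac{1}{2\pi i}\int_{\mathbb{R}} g(s)\,\mathrm{d}s, \qquad g(s)=(z(s)^\alpha+A)^{-1}y(z(s),t)\,z'(s).
\]
The quadrature is the truncated trapezoidal rule on this line (in the formulation of \cite{ECQ22}, after the standard change of variables). Its error splits into a discretization error and a truncation error. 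The discretization error is controlled by the standard estimate for analytic integrands on a strip $\{|\mathrm{Im}\, s|<d\}$ (e.g.\ \cite{LF10}): it is bounded by $\frac{2}{e^{2\pi d/h}-1}\int_{\partial S_d}\|g\|_\mu$. The truncation error is bounded by $\int_{|s|>Mh}\|g(s)\|_\mu\,\mathrm{d}s$. Balancing step $h$ against $M$ as in \cite{ECQ22} gives the $e^{-\sqrt{CM}}$ rate, provided $\|g\|_\mu$ decays exponentially, at rate $e^{-c|s|}$, uniformly on the strip.

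The key step is the bound on $\|A^\mu (z^\alpha+A)^{-1}y(z,t)\|$ for $z$ on the shifted contours $z(s\pm id)$, which stay in the region between the sectors $\Sigma_\varphi$ and $\tilde\lambda+\Sigma_{\beta+\pi/2}$. We factor $I=A^{\theta_1}A^{-\theta_1}$ and use \eqref{resol_Atheta} with $\theta=\mu+\theta_1<1$:
\[
\|A^\mu (z^\alpha+A)^{-1}y\|\le |z|^{-\alpha(1-\mu-\theta_1)}\int_0^t |e^{z(t-r)}|\,\|A^{-\theta_1}f(u(r))\|\,\mathrm{d}r.
\]
Then \eqref{ass_f_bound} gives the bound $\int_0^t |e^{z(t-r)}|\,C r^{-\theta_2}\,\mathrm{d}r$. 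Since $\mathrm{Re}\, z\le \tilde\lambda$ on these contours, we have $|e^{z(t-r)}|\le e^{\tilde\lambda T}$. The remaining integral is $\int_0^t r^{-\theta_2}\,\mathrm{d}r\le Ct^{1-\theta_2}$, which is at most $Ct^{-\theta_2}$ for $t\le T$. This is where $\theta_2<1$ is used.

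Combining with $|z'(s)|\simeq|z(s)|\simeq e^{|s|}$ for large $|s|$, we obtain $\|g(s)\|_\mu\le Ct^{-\theta_2}e^{-(\alpha(1-\mu-\theta_1)-\text{small})|s|}$. Equivalently, the decay rate is $\kappa:=\alpha(1-\mu-\theta_1)>0$. The apparent loss of one power of $|z|$ from $z'$ must be handled as in \cite{ECQ22}, since $|z'|\sim|z|$ would otherwise destroy integrability.

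That loss is the main obstacle. We would first try to recover the missing factor $|z|^{-1}$ by integrating by parts in $r$:
\[
y(z,t)=z^{-1}\Big(\int_0^t e^{z(t-r)}\partial_r f(u(r))\,\mathrm{d}r + e^{zt}f(u_0)-f(u(t))\Big).
\]
This, however, requires bounds on $\partial_r f$, which are not assumed. We therefore expect instead that the incorporation of $e^{zt}$ mentioned before the lemma absorbs it. After rescaling the contour by $1/t$, the factor $e^{zt}$ supplies the decay $e^{-c\cosh s}$ only on the part where $t-r$ is comparable to $t$. The near-diagonal part is then controlled using $\int_0^t|e^{z(t-r)}|\,\mathrm{d}r\le C/|\mathrm{Re}\, z|\sim |z|^{-1}$ for $\mathrm{Re}\, z\to-\infty$ on the contour branches. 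This estimate supplies exactly the needed $|z|^{-1}$ and leaves the net decay $|z|^{-\kappa}\sim e^{-\kappa|s|}$.

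Since $\mathrm{Re}\, z(s)\sim -\tilde\lambda\sin\beta\cosh s\to-\infty$, we bound $\int_0^t|e^{z(t-r)}|r^{-\theta_2}\,\mathrm{d}r$ by splitting at $r=t/2$. On $(0,t/2)$ the kernel $e^{\mathrm{Re}\, z\, t/2}$ is tiny, and $r^{-\theta_2}$ integrates to $t^{1-\theta_2}$. On $(t/2,t)$ we use $r^{-\theta_2}\le Ct^{-\theta_2}$ together with $\int|e^{z(t-r)}|\le C|z|^{-1}$. Both pieces are at most $Ct^{-\theta_2}|z|^{-1}$, up to an $e^{\tilde\lambda T}$ constant. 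With integrable exponential decay $e^{-\kappa|s|}$ uniform on the strip, the trapezoidal discretization and truncation errors are each $Ct^{-\theta_2}e^{-\sqrt{C\kappa M}}$ after optimizing $h\sim\sqrt{2\pi d/(\kappa M)}$. This gives the claim.
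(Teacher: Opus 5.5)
Your proposal is correct and, once the detours are set aside (the crude bound that loses $|z|^{-1}$, the rejected integration by parts, and the remark about rescaling the contour by $1/t$, which plays no role), it is the paper's argument. You split $\int_0^t e^{\mathrm{Re}(z)(t-r)} r^{-\theta_2}\,\mathrm{d}r$ at $r=t/2$ to get $\|A^{-\theta_1}y(z,t)\|\le Ct^{-\theta_2}|z|^{-1}$, combine this with \eqref{resol_Atheta} for $\theta=\mu+\theta_1<1$ and $|\tilde{\zeta}'|\simeq|\tilde{\zeta}|$ to get decay $e^{-\alpha(1-\mu-\theta_1)|x|}$ uniformly on the strip, and finish with the standard trapezoidal-rule estimate (the paper cites \cite[Lemma 3.4]{Thom10}); the one step left implicit, as in the paper, is that the $(0,t/2)$ piece is at most $Ct^{-\theta_2}|z|^{-1}$ because $t\,e^{-c|z|t/2}\le C|z|^{-1}$, not because it is ``tiny'' uniformly in $t$.
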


\begin{proof}
    Under hypothesis \eqref{ass_f_bound}, we estimate $y(z,t)$:
    $$\begin{aligned}
        \Vert A^{-\theta_1} y(z,t)\Vert &\leq\int_0^te^{\text{Re}(z)(t-s)}\Vert A^{-\theta_1} f(u(s))\Vert \text{d}s\leq C\int_0^t e^{\text{Re}(z)(t-s)}s^{-\theta_2} \text{d}s
        \\
        &\leq \int_0^{t/2} e^{\text{Re}(z)(t-s)} s^{-\theta_2} \text{d}s + \int_{t/2}^t e^{\text{Re}(z)(t-s)}s^{-\theta_2} \text{d}s
        \\
        &\leq C t^{-\theta_2} |z|^{-1} \quad \text{for } z\in \Gamma_{\tilde{\lambda}}.
        \end{aligned}$$   
    Combining this with the resolvent bound \eqref{resol_Atheta} yields
    \begin{equation}\label{est_Fz}
        \Vert G(z)\Vert_\mu := \Vert A^{\mu+\theta_1}(z^\alpha+A)^{-1} A^{-\theta_1} y(z,t)\Vert \leq Ct^{-\theta_2}|z|^{-1-\alpha(1-\mu-\theta_1)}\quad \text{for }z\in \Gamma_{\tilde{\lambda}}.
    \end{equation}
    We define $g(s)=G(\tilde{\zeta}(s))\tilde{\zeta'}(s)$ where the hyperbolic contour $\tilde{\zeta}(s)$ is specified in \eqref{def_contour}. Writing $s=x+iy$, we have $\tilde{\zeta}(s)=\tilde{\lambda}(1-\sin(\beta-y)\cosh(x))-i\tilde{\lambda}\cos(\beta-y)\sinh(x)$. It is straightforward to verify that $g(x+iy)$ is analytic in the strip
    $$
        D=\{s\in\mathbb{C}:|\mathrm{Im}(s)|\leq \tilde{d}\}\quad\text{with any fixed }\tilde{d}\in\left(0,\frac\pi2-\beta\right).$$
    Since $|\tilde{\zeta}'(s)| \simeq |\tilde{\zeta}(s)|$ as $|\operatorname{Re}(s)| \rightarrow \infty$, the bound in \eqref{est_Fz} implies that $g(x+iy)$ exhibits single-exponential decay as $x\to\pm\infty$: 
    $$  
        \Vert A^\mu g(x+iy)\Vert \leq C t^{-\theta_2}\big|\tilde{\zeta}(s)\big|^{-\alpha(1-\mu-\theta_1)} \leq  Ct^{-\theta_2} e^{-\alpha(1-\mu-\theta_1)|x|} \quad\text{for } x\in\mathbb{R} \text{ and } |y|\leq \tilde{d}.$$ 
    By \cite[Lemma 3.4]{Thom10}, we finally obtain
    $$
        \|\mathcal{E}_{2,q}(t)\|_\mu \leq C t^{-\theta_2}e^{- \sqrt{2 \pi \tilde{d} \alpha(1-\mu-\theta_1) M}}.$$
    This completes the proof. 
\end{proof}

We are now positioned to prove the main convergence theorem. It follows the strategy of \cite[Theorem 2.1]{ECQ22} and splits the total error into two parts, the main difference being that we additionally have to 
handle the factor $A^{\theta}$, i.e., the singularity transferred from the nonlinear term. 

\begin{proof}[Proof of Theorem \ref{thm:convergence}]
    Let $u^{(\tau)}(s)$ denote the numerical solution, which we take to be piecewise linear in time.  
    Denote by $\hat{I}_\tau f (u^{(\tau)})$ the extrapolation of $f(u)$ using the values of the numerical solution, defined analogously to \eqref{def_extrap}. Then we introduce an intermediate solution $u^*_n$ by
    \begin{align}
        y_{n}(z)&=e^{z\tau_{n}}y_{n-1}(z)+\int_{t_{n-1}}^{t_{n}}e^{z(t_{n}-s)}\hat{I}_{\tau}f(u^{(\tau)})(s)\text{d}s\quad\text{for }z\in\Gamma_{\tilde{\lambda}},
        \\
        u_{n}^{*}&=\frac{1}{2\pi i}\int_{\Gamma_{\lambda}}e^{zt}z^{\alpha-1}(z^{\alpha}+A)^{-1}u_{0}dz+\frac{1}{2\pi i}\int_{\Gamma_{\tilde{\lambda}}}(z^{\alpha}+A)^{-1}y_{n}(z)dz. \label{Numsche_u_star}
    \end{align}
    The total error is decomposed as $e_n = \tilde{e}_n + e_n^*$ with $\tilde{e}_n = u(t_n) - u_n^*$ and $e_n^* = u_n^* - u_n$.

    To analyze the first part $\tilde{e}_n$, we observe that $u^*_n$ coincides with the solution $u^*(t_n)$ of the semilinear subdiffusion equation
    $$\begin{cases}
        \partial_{t}^{\alpha}u^{*}(t)+A u^{*}(t)=\hat{I}_{\tau}f(u^{(\tau)})(t), \quad 0<t\leq T,
        \\
        u^{*}(0)=u_{0}.\end{cases}$$
    Define the interpolation error $\mathcal{E}_f(t) = f(u(t))-\hat{I}_\tau f(u)(t)$. Using the mild solution representation \eqref{eq_mildSolu} for the subdiffusion equation, the difference $\tilde{e}=u-u^*$ can be bounded by
    \begin{equation}\label{err_til_spli}
    \begin{aligned}
        \Vert \tilde{e}(t_n)\Vert_\mu 
        &\leq \left\Vert \int_0^{t_n}A^{\hat{\gamma}_\mu+\mu} E(t_n-s)A^{-\hat{\gamma}_\mu}(\hat{I}_\tau f(u)-\hat{I}_\tau f(u^{(\tau)}))\text{d}s\right\Vert
        \\
        & \quad +\left\Vert \sum_{j=1}^{n} \int_{t_{j-1}}^{t_j}A^{\gamma_4+\mu}E(t_n-s)A^{-\gamma_4}\mathcal{E}_f(s)\text{d}s \right\Vert.
    \end{aligned}
    \end{equation}

    We first estimate the remainder $\Vert A^{-\gamma_4}\mathcal{E}_f(t)\Vert$ for $t\in(t_{j-1},t_j)$, which appears in the second term on the right-hand side. Standard polynomial interpolation theory provides
    \begin{equation}\label{remainder_f_1}
        \Vert A^{-\gamma_4}\mathcal{E}_f(t)\Vert \leq
        \begin{cases}
            C\tau_j^k\max_{s\in[t_{j-k},t_j]}\left\| A^{-\gamma_4}\frac{\text{d}^k}{\text{d} t^k}f(u(s))\right\| &\text{for } j\geq k+1,
            \\
            \int_{t_{j-1}}^{t_j} \left\| A^{-\gamma_4}\frac{\text{d}}{\text{d} t} f(u(s))\right\| \text{d}s &\text{for } 1\leq j< k. 
        \end{cases}
    \end{equation}
    For the case of $j=k$ and $t\in (t_{k-1},t_k]$, expanding $\mathcal{E}_f(t)$ to first order at $t_0$ and exploiting the property $\sum_{i=1}^{k} L_{ki}(s)=1$ yields
    \begin{equation}\label{remainder_f_2}
    \begin{aligned}
        \Vert A^{-\gamma_4}\mathcal{E}_f(t)\Vert \leq 
        \left\|  A^{-\gamma_4} \left( \int_{0}^{t}\frac{\text{d}}{\text{d} t} f(u(s))\text{d}s - \dots - L_{k,k-1}\int_{0}^{t_1}\frac{\text{d}}{\text{d} t} f(u(s))\text{d}s \right)    \right\|.
    \end{aligned}
    \end{equation}
    Merging estimates \eqref{remainder_f_1} and \eqref{remainder_f_2}, and using Proposition \ref{pro:solRegu} together with the graded mesh property $\tau_{j} \simeq \tau_{j+k}$, we derive a uniform bound
    \begin{equation}\label{est_remainder_f}
        \Vert A^{-\gamma_4}\mathcal{E}_f(t)\Vert \leq C \tau_j^k t_j^{-k+\alpha\gamma} \quad \text{for } t\in (t_{j-1},t_j],\: 1\leq j\leq n.  
    \end{equation}

    The first term on the right-hand side of \eqref{err_til_spli} relies on Assumption \ref{ass:Lips_Agamma}, which necessitates the boundedness of $u^{(\tau)}$ in $D(A^\gamma)$. This boundedness will in turn be deduced from the convergence result \eqref{res_conve}. Therefore we proceed by mathematical induction.  
    By \eqref{sol_regular}, there exists $R > 0$ such that $\Vert A^\gamma u(t) \Vert\leq R$ for $t\in [0,T]$. For the base case $n = 0$, we have $\Vert u_0\Vert_\gamma \leq M + 1$. Then assume inductively that
    \begin{equation}\label{ass_induc}
        \Vert u_n\Vert_\gamma\leq M + 1, \quad \text{for } n\leq m-1.  
    \end{equation}
    We aim to prove this bound for $n = m$ provided $\tau$ is sufficiently small and $M$ is sufficiently large. Since
    $$  
        \Vert u_m\Vert_\gamma \leq \Vert u(t_m)-u_m\Vert_\gamma+\Vert u(t_m)\Vert_\gamma,$$
    it suffices to establish the convergence of $\Vert e_m \Vert_\gamma$. 

    Under the induction hypothesis and Assumption \ref{ass:Lips_Agamma}, we have 
    $$
        \Vert A^{-\hat{\gamma}_\mu}(f(u(t_j))-f(u_j))\Vert \leq  C \Vert e_j\Vert_\mu, \quad \text{for } j<m.  $$
    Inserting this estimate and \eqref{est_remainder_f} into \eqref{err_til_spli}, and using Lemma \ref{le:smoo_solOpe} together with the graded mesh property \eqref{def_gradeMesh}, yields
    \begin{equation}\label{est_err1}
    \begin{aligned}
        \Vert \tilde{e}(t_n)\Vert_\mu 
        &\leq C\sum_{j=1}^n\int_{t_{j-1}}^{t_j}(t_n-s)^{\alpha(1-\hat{\gamma}_\mu-\mu)-1}\Vert e_{j}\Vert_\mu \text{d}s + \sum_{j=1}^{n}\int_{t_{j-1}}^{t_j}  (t_n-s)^{\alpha(1-\gamma_4-\mu)-1} \tau_j^k t_j^{\alpha\gamma-k } \text{d}s
        \\
        & \leq  C\sum_{j=1}^n \tau_j(t_n- t_{j-1})^{\alpha(1-\hat{\gamma}_\mu-\mu)-1} \Vert e_{j}\Vert_\mu 
        + C \tau^{k} t_n^{-\eta},
    \end{aligned}
    \end{equation}
    where $k\nu-k+\alpha\gamma>-1$,  $\eta=\alpha(\gamma_4+\mu-1)+k-\alpha\gamma-k\nu>0$,  and $n\leq m$.

    Next we analyze the second part $e^*_n = u^*_n - u_ n$. Consider the difference between \eqref{NumSch_un} and \eqref{Numsche_u_star}:
    \begin{align*}
        A^\mu e_{n}^{*}&=\frac{1}{2\pi i}\int_{\Gamma_{\lambda}}e^{zt}z^{\alpha-1}(z^{\alpha}+A)^{-1} A^\mu u_{0}dz-\sum_{j=-M}^{M}w_{j}z_{j}^{\alpha-1}(t_{n})(z_{j}^{\alpha}(t_{n})+A)^{-1} A^\mu u_{0}
        \\
        &\quad +\frac1{2\pi i}\int_{\Gamma_\lambda}A^{\hat{\gamma}_\gamma+\mu}(z^\alpha+A)^{-1} A^{-\hat{\gamma}_\gamma}y_n(z)\mathrm{d}z-\sum_{j=-M}^{M}\tilde{w}_j A^{\hat{\gamma}_\gamma+\mu}(\tilde{z}_j^\alpha+A)^{-1}A^{-\hat{\gamma}_\gamma}y_n(\tilde{z}_j)
        \\
        &= A^\mu \mathcal{E}_{1,q}(t_n)+ A^\mu\widetilde{\mathcal{E}}_{2,q}(t_n).
    \end{align*}
    By the induction hypothesis \eqref{ass_induc} and Assumption \ref{ass:Lips_Agamma}, $\Vert A^{-\hat{\gamma}_\gamma}f(u_n)\Vert $ is bounded for $1 \leq n \leq m - 1$; consequently,
    $$
        \Vert  A^{-\hat{\gamma}_\gamma} \hat{I}_{\tau} f(u^{(\tau)})(t)\Vert \leq C, \quad 0 \leq t \leq t_m.$$
    Hence, by \eqref{err_q1} and Lemma \ref{le:err_q2} with $\theta_1=\hat{\gamma}_\gamma$ and $\theta_2=0$, we obtain
    \begin{equation}\label{est_err2}
        \Vert e^*_n \Vert_\mu \leq \Vert  \mathcal{E}_{1,q}(t_n)\Vert_\mu + \Vert \tilde{\mathcal{E}}_{2,q}(t_n)\Vert_\mu \leq Ce^{-C M/\ln M} + C e^{- \sqrt{C \alpha(1-\mu-\hat{\gamma}_\gamma) M}}.
    \end{equation}
    Combining \eqref{est_err1} and \eqref{est_err2} via the triangle inequality gives
    \begin{align*}
        \Vert e_n\Vert_\mu &\leq C \tau^{k} t_n^{-\eta}+ Ce^{-C M/\ln M} + C e^{- \sqrt{C \alpha(1-\mu-\hat{\gamma}_\gamma) M}} 
        \\
        &\quad +  C\sum_{j=1}^n \tau_j(t_n- t_{j-1})^{\alpha(1-\hat{\gamma}_\mu-\mu)-1} \Vert e_{j}\Vert_\mu \quad \text{for } n\leq m.
    \end{align*}
    Applying the discrete Gronwall inequality from \cite[Lemma B.1]{ECQ22} yields
    \begin{align}\label{res_conve2}
        \Vert e_n\Vert_\mu &\leq C \tau^{k} t_n^{-\eta}+ Ce^{-C M/\ln M} + C e^{- \sqrt{C \alpha(1-\mu-\hat{\gamma}_\gamma) M}} \quad \text{for } n\leq m.
    \end{align}
    Taking $\mu=\gamma$ and $n=m$, we note that $k-\eta>0$, which guarantees the uniform convergence of $\Vert e_m\Vert_\gamma$ and therefore the numerical solution remains bounded in $D(A^{\gamma})$. This completes the proof. 
\end{proof}

\begin{remark}\label{rem:extension}
The proof of Theorem \ref{thm:convergence} admits refined parameter choices to obtain variants of the error estimate in \eqref{res_conve2}.
(\rmnum{1}). The singularity exponent $\eta$ in the first term is governed by the grading parameter $\nu$. Increasing $\nu$ can enforce $\eta = 0$, yielding uniform $k$-order convergence. 
(\rmnum{2}). The quadrature error in the third term is derived via Lemma \ref{le:err_q2} with $\theta_1 = \hat{\gamma}_\gamma, \theta_2 = 0$. 
In practice, increasing $\theta_2$ reduces $\theta_1$, thereby improving the exponential decay rate of the quadrature error away from $t_0$, albeit introducing singularities near the initial time. 
Collectively, these adjustments redistribute the singularities between the solution operator (resolvent) and the nonlinearity within the estimation procedure.
\end{remark}

\section{Numerical experiments} \label{sec:num_exper}

\begin{table}[b]
	\centering
	\caption{Temporal discretization errors $\Vert U_{\tau}-U_{\tau/2}\Vert_\alpha\:(\alpha=0,1)$ and convergence orders of the $2$-step method for the model with nonlinear term (a) and initial values (\rmnum{1})--(\rmnum{3}), $k=2$.  Top: $L^2$ norm errors; bottom: $H^1$ norm errors.}
    \label{tab:temConv_k2NL1}
    \begin{adjustbox}{max width=\textwidth}
	\begin{tabular}{cccccccccc}
		\hline
		\multirow{2}{*}{$\tau$} & & \multicolumn{2}{c}{(\rmnum{1}), $\nu = 0.5$} &&  \multicolumn{2}{c}{ (\rmnum{2}), $\nu = 0.45$}  && \multicolumn{2}{c}{(\rmnum{3}), $\nu = 0.4$}
		\\ \cline{3-4} \cline{6-7} \cline{9-10} 
		~& & Error & Order &&  Error & Order &&  Error & Order 
		\\ \hline
        $1/2^6$ && 2.701E-03 & -- & ~ & 5.039E-03 & -- & ~ & 1.495E-02 & --  \\ 
        $1/2^7$ && 6.695E-04 & 2.01 & ~ & 1.181E-03 & 2.09 & ~ & 3.856E-03 & 1.96  \\ 
        $1/2^8$ && 1.606E-04 & 2.06 & ~ & 2.770E-04 & 2.09 & ~ & 9.878E-04 & 1.96  \\ 
        $1/2^9$ && 3.759E-05 & 2.09 & ~ & 6.538E-05 & 2.08 & ~ & 2.498E-04 & 1.98  \\    
        \hline
        $1/2^6$ && 1.529E-02 & -- & ~ & 2.820E-02 & -- & ~ & 8.183E-02 & --  \\ 
        $1/2^7$ && 3.790E-03 & 2.01 & ~ & 6.609E-03 & 2.09 & ~ & 2.110E-02 & 1.96  \\ 
        $1/2^8$ && 9.092E-04 & 2.06 & ~ & 1.551E-03 & 2.09 & ~ & 5.406E-03 & 1.96  \\ 
        $1/2^9$ && 2.129E-04 & 2.09 & ~ & 3.661E-04 & 2.08 & ~ & 1.367E-03 & 1.98 \\ 
        \hline
	\end{tabular}
    \end{adjustbox}
\end{table}

This section reports several numerical experiments carried out to confirm the theoretical analysis. 
We investigate the semilinear subdiffusion model \eqref{eq_main} with $\alpha=0.5$ on the unit square $\Omega = (0,1)\times (0,1)$ up to $T = 1$.
The sectorial operator $A$ is taken as the realization of the Laplacian $-\Delta$ in $L^2(\Omega)$ subject to homogeneous Dirichlet boundary conditions.
For the spatial discretization we employ the piecewise linear Galerkin finite element method on a mesh that is sufficiently fine, so that it does not affect the observation of the time discretization errors.
The time stepping is performed via the exponential convolution quadrature (ECQ) method \eqref{NumSch_un}--\eqref{NumSch_yn} proposed in \cite{ECQ22}.
All computations are carried out in MATLAB R2025a on a personal laptop.

Our main objective is to determine the convergence order of the ECQ method at $T$ for the semilinear subdiffusion problem \eqref{eq_main} under different initial data and nonlinear terms. 
The following three initial values are examined.
\begin{enumerate}[label=(\roman*).]
    \item $u_0(x_1,x_2) = ((x_1-0.5)^2+(x_2-0.5)^2)^{-0.49}-0.5^{-0.49} \in L^2$ but $u_0 \notin  D(A^{0.01})$.  
    \item $u_0(x_1,x_2) = ((x_1-0.5)^2+(x_2-0.5)^2)^{-0.24}-0.5^{-0.24} \in D(A^{0.25})$ but $u_0 \notin  D(A^{0.26})$. 
    \item $u_0(x_1,x_2) =  5(x_1(1-x_1))(x_2(1-x_2)) \in D(A^{0.5})$.
\end{enumerate}
Two nonlinearities are considered:
\begin{enumerate}[label=(\alph*)]
    \item $f(u)=5u(10-u)$ with $\gamma_1=0$. 
    \item $f(u)= u(10-u)(15-u)$ with  $\gamma_1 = 0.25$. 
\end{enumerate}
According to the examples discussed in \cite{HERK25}, the above nonlinear terms $f(u)$ satisfy Assumptions \ref{ass:Lips_L2}--\ref{ass:FrechetDiff}.

\begin{table}[h]
	\centering
	\caption{Temporal discretization errors $\Vert U_{\tau}-U_{\tau/2}\Vert_\alpha\:(\alpha=0,1)$ and convergence orders of the $3$-step method for the model with nonlinear term (a) and initial values (\rmnum{1})--(\rmnum{3}).  Top: $L^2$ norm errors; bottom: $H^1$ norm errors.}
    \label{tab:temConv_k3NL1}
    \begin{adjustbox}{max width=\textwidth}
	\begin{tabular}{cccccccccc}
		\hline
		\multirow{2}{*}{$\tau$} & & \multicolumn{2}{c}{ (\rmnum{1}), $\nu = 0.67$} &&  \multicolumn{2}{c}{ (\rmnum{2}), $\nu = 0.63$}  && \multicolumn{2}{c}{ (\rmnum{3}), $\nu = 0.59$}
		\\ \cline{3-4} \cline{6-7} \cline{9-10} 
		~& & Error & Order &&  Error & Order &&  Error & Order 
		\\ \hline
        $1/2^5$ && 5.949E-04 & -- & ~ & 1.457E-03 & -- & ~ & 4.963E-03 & --  \\ 
        $1/2^6$ && 7.158E-05 & 3.05 & ~ & 1.593E-04 & 3.19 & ~ & 6.110E-04 & 3.02  \\ 
        $1/2^7$ && 8.207E-06 & 3.12 & ~ & 1.780E-05 & 3.16 & ~ & 7.313E-05 & 3.06  \\ 
        $1/2^8$ && 9.526E-07 & 3.11 & ~ & 2.094E-06 & 3.09 & ~ & 8.629E-06 & 3.08  \\    
        \hline
        $1/2^5$ && 3.341E-03 & -- & ~ & 8.178E-03 & -- & ~ & 2.713E-02 & --  \\ 
        $1/2^6$ && 4.022E-04 & 3.05 & ~ & 8.948E-04 & 3.19 & ~ & 3.341E-03 & 3.02  \\ 
        $1/2^7$ && 4.611E-05 & 3.12 & ~ & 1.001E-04 & 3.16 & ~ & 3.999E-04 & 3.06  \\ 
        $1/2^8$ && 5.357E-06 & 3.11 & ~ & 1.178E-05 & 3.09 & ~ & 4.718E-05 & 3.08 \\ 
        \hline
	\end{tabular}
    \end{adjustbox}
\end{table}

\begin{table}[h]
	\centering
	\caption{Temporal discretization errors $\Vert U_{\tau}-U_{\tau/2}\Vert_\alpha\:(\alpha=0,1)$ and convergence orders for the model with nonlinear term (b) and initial values (\rmnum{2})--(\rmnum{3}), obtained by the $2$-step (left) and $3$-step (right) methods. Top: $L^2$ norm errors; bottom: $H^1$ norm errors.} 
    \label{tab:temConv_k23NL2}
	\begin{adjustbox}{max width=16cm}
	\begin{tabular}{cccccccc:cccccc}
		\hline
		\multirow{2}{*}{$\tau$} & & \multicolumn{2}{c}{ (\rmnum{2}), $\nu = 0.45$} && \multicolumn{2}{c}{ (\rmnum{3}), $\nu = 0.4$} &&& \multicolumn{2}{c}{ (\rmnum{2}), $\nu = 0.63$} && \multicolumn{2}{c}{(\rmnum{3}), $\nu = 0.59$}
		\\ \cline{3-4} \cline{6-7} \cline{10-11} \cline{13-14}
		~& & Error & Order && Error & Order &&& Error & Order && Error & Order
		\\ \hline
		$1/2^5$ && 1.535E-02 & -- & ~ & 8.598E-04 & -- & ~ && 1.171E-04 & -- & ~ & 4.319E-04 & -- \\
		$1/2^6$ && 3.523E-03 & 2.12 & ~ & 2.402E-04 & 1.84 & ~ && 1.260E-05 & 3.22 & ~ & 5.566E-05 & 2.96 \\
		$1/2^7$ && 8.038E-04 & 2.13 & ~ & 6.364E-05 & 1.92 & ~ && 1.384E-06 & 3.19 & ~ & 6.758E-06 & 3.04 \\
		$1/2^8$ && 1.841E-04 & 2.13 & ~ & 1.640E-05 & 1.96 & ~ && 1.616E-07 & 3.10 & ~ & 7.991E-07 & 3.08 \\
		\hline
		$1/2^5$ && 8.573E-02 & -- & ~ & 4.685E-03 & -- & ~ && 7.923E-04 & -- & ~ & 3.434E-03 & -- \\
		$1/2^6$ && 1.968E-02 & 2.12 & ~ & 1.309E-03 & 1.84 & ~ && 8.350E-05 & 3.25 & ~ & 4.446E-04 & 2.95 \\
		$1/2^7$ && 4.494E-03 & 2.13 & ~ & 3.468E-04 & 1.92 & ~ && 8.998E-06 & 3.21 & ~ & 5.398E-05 & 3.04 \\
		$1/2^8$ && 1.030E-03 & 2.12 & ~ & 8.935E-05 & 1.96 & ~ && 1.037E-06 & 3.12 & ~ & 6.372E-06 & 3.08 \\
		\hline
	\end{tabular}
	\end{adjustbox}
\end{table}

Theorem \ref{thm:convergence} states that, by choosing suitable parameters $M$ and $\nu$, the $k$-step method achieves convergence of order $k$ in the $L^2$ norm.
The choice of $M$ should guarantee that $e^{-\sqrt{M}/C}$ is smaller than $t_n^{-\eta}\tau^k$.
Hence we set $M = 10 |\ln(1/\tau)|^2$, which ensures that the temporal convergence order is not masked by this parameter.
Regarding the step sizes in \eqref{def_gradeMesh}, two strategies are adopted in the experiment.
The first one uses a graded mesh $\{t_n \:|\: t_n = (n/N)^r T,\; r = 1/(1-\nu) \}$ which is directly generated and satisfies the mesh assumption \eqref{def_gradeMesh} with $\tau = rT/N$.
This mesh is employed for the relatively stable $2$-step method.
For the $3$-step method, the initial step sizes can be adjusted to enhance stability; we set $\tau_1=\tau_2=\tau_3$ and generate the remaining steps recursively by $\tau_n = (t_{n-1}/T)^{\nu}\tau$ for $n\geq 4$. 
The total numbers of steps produced by these two approaches are comparable.

Because an exact solution is not available for the problem under study, the numerical convergence order is estimated by
\begin{equation*}
    \log\left(\frac{\Vert U_{\tau}-U_{\tau/2} \Vert } {\Vert U_{\tau/2}-U_{\tau/4} \Vert}\right)/\log(2),
\end{equation*} 
using the three finest meshes, where $U_{\tau}$ denotes the numerical approximation at $T$ obtained with the maximum step size $\tau$.

The numerical errors for the problem \eqref{eq_main} with nonlinearity (a) are displayed in Tables \ref{tab:temConv_k2NL1}--\ref{tab:temConv_k3NL1}.
Compared with the results in \cite{ECQ22}, our work admits initial data that are not in $L^\infty$, such as (\rmnum{1}) and (\rmnum{2}), and the requirement on the graded mesh parameter $\nu$ is slightly relaxed to $\nu > 1-\frac{1+\alpha \gamma}{k}$, featuring the additional factor $\frac{\alpha \gamma}{k}$.
From Tables \ref{tab:temConv_k2NL1}--\ref{tab:temConv_k3NL1} we observe that the $k$-step method indeed exhibits $k$th-order convergence in the $L^2$ norm for initial values (\rmnum{1})--(\rmnum{3}).
We also present results in the $H^1$ norm, where $k$th-order convergence is still observed. This suggests that our conclusions can be further extended, similarly to the discussion in \cite[Theorem 3]{ERK_FNM}.

For nonlinearity (b), the parameter $\gamma_1$ is larger than that of nonlinearity (a), which restricts the admissible range of initial values. We therefore carry out the same experiments only for initial values (\rmnum{2}) and (\rmnum{3}); the corresponding results are reported in Table \ref{tab:temConv_k23NL2} and are again consistent with the expected behavior.

\bibliographystyle{elsarticle-num}
\bibliography{biob}

%

\end{document}